\let\phi=\varphi
\def\eps{\varepsilon}
\newtheorem{theorem}{Theorem} 
\newtheorem{lemma}[theorem]{Lemma}     
\newtheorem{corollary}[theorem]{Corollary}
\newtheorem{proposition}[theorem]{Proposition}
\newtheorem{remark}[theorem]{Remark}
\newtheorem*{remark*}{Remark}
\newtheorem{definition}[theorem]{Definition} 
\long\def\koment#1{}
\newcommand{\Arg}{\operatorname{Arg}}
\newcommand{\nas}{nonaxisymmetric}
\newcommand{\tf}{\tilde f}
\newcommand{\talpha}{\tilde \alpha}
\newcommand{\tha}{\tilde h}
\renewcommand{\Im}{\operatorname{Im}}
\renewcommand{\Re}{\operatorname{Re}}
\newcommand{\R}{\mathbb{R}}
\newcommand{\LL}{\mathcal{L}}
\newcommand{\C}{\mathbb{C}}
\newcommand{\U}{\mathcal{U}}
\newcommand{\V}{\mathcal{V}}
\newcommand{\A}{\mathcal{A}}
\newcommand{\HH}{\mathcal{H}}
\newcommand{\X}{\mathcal{X}}
\newcommand{\F}{\mathcal{F}}
\newcommand{\T}{\mathcal{T}}
\newcommand{\VV}{\mathcal {G}}
\newcommand{\cV}{\mathcal {D}_\sigma}
\newcommand{\dvg}{\operatorname{div}}
\def\ra{\right\rangle}
\def\la{\left\langle}
\newcommand{\dual}[2]{\la#1,#2\ra}  
\def\L2s{L^2_{\sigma}(\Omega)}
\newcommand{\Lds}{L^2_{\nu}(\partial\Omega)}
\def\Ldva{L^2(\Omega)}
\def\H2s{H^2_{\sigma}(\Omega)}
\def\Hjs{H^1_{\sigma}(\Omega)}
\def\Hpulsg{H^{1/2}_{\nu}(\partial\Omega)}
\newcommand{\Htripulsg}{H^{3/2}_{\nu}(\partial\Omega)}
\def\Hpulsgm{H^{\frac12}_{\nu}(\partial\Omega)}
\newcommand{\Htripulsgm}{H^{\frac32}_{\nu}(\partial\Omega)}
\newcommand{\bneg}{\beta^{-1}}
\title{Maximal regularity of Stokes problem with dynamic boundary condition --- Hilbert setting}
\author{{Tomáš} {Bárta}, {Paige} {Davis}, {Petr} {Kaplický}} 
\date{}
\begin{document}

\maketitle

\begin{abstract}
{
For the evolutionary Stokes problem with dynamic boundary conditions, we show the maximal regularity of weak solutions in time. Due to the characterization of $R$-sectorial operators on Hilbert spaces, the proof reduces to identifying the appropriate functional analytic setting and proving that the corresponding operator is sectorial, i.e., that it generates an analytic semigroup.
}
\end{abstract}

\section{Introduction}

Certain materials, like polymer melts, can slip over solid surfaces. Such boundary behavior is described by slip velocity models; see \cite[Section~6]{Ha2012} for an overview.  Moreover, it has been observed that the slip is often not constant but varies over time, depending on the fluid's current state. Such fluids need to be represented using dynamic slip models. They were first proposed in \cite{PePe1968} in a general form
$$
u_\tau+\lambda_\tau\partial_t u_\tau=\varphi(\sigma_w),
$$
where $u_\tau$ is the slip velocity, $t$ stands for the time, $\lambda_\tau$ is the slip relaxation time, $\sigma_w$ stands for the wall shear stress and $\varphi$ should be determined based on the rheological properties of the fluid under consideration.

The mathematical studies of problems with dynamic boundary conditions in the context of fluid mechanics started by the thesis of Maringová, \cite{maringova2019}. She studied the existence of solutions to systems of (Navier)-Stokes type under various constitutive relations for the extra stress tensor and the modified dynamic boundary condition $s(u_\tau)+\partial_t u_\tau=-\sigma_w$ with a given--possibly nonlinear--function $s$. These results were later published in \cite{AbBuMa2021}. 

We are interested in the optimal regularity of problems with dynamic boundary conditions in the context of Lebesgue spaces. Specifically, we focus on the linear Stokes problem. First, we find the result interesting. Second, it provides a basis for studying the regularity of more complex systems. Moreover, the linear theory can be considered as a tool for the reconstruction of pressure; see \cite{SovonWa1986}.

We study the problem
\begin{align}	\label{P1.1}
\partial_t u - \Delta u + \nabla p &= f \qquad \text{ in } I\times\Omega,
\\		\label{P1.2}
\dvg u &=0 \qquad \text{ in } I\times\Omega,
\\		\label{P1.3}
\beta\partial_t u + (2Du\cdot \nu)_{\tau} + \alpha u_{\tau}&= \beta g  \qquad \text{ in } I\times\partial\Omega,
\\	\label{P1.4}
u_{\nu} &= 0 \qquad \text{ in } I\times\partial\Omega,
\\		\label{P1.5}
u &= u_0 \qquad \text{ in } \{0\}\times\Omega \\   \label{P1.6}
u &= v_0 \qquad \text{ in }  \{0\}\times\partial\Omega
\end{align}
in a bounded domain $\Omega\subset \mathbb R^d$, $d\geq2$ with $C^{2,1}$ boundary and a time interval $I=(0,T)$, $T>0$. 
The constants $\alpha\in\R$, $\beta>0$, the functions $f:I\times\Omega\to \mathbb R^d$, $g:I\times\partial\Omega\to \mathbb R^d$, $u_0:\Omega\to \mathbb R^d $ and $v_0:\partial\Omega\to \mathbb R^d $ are given. Subscripts $(\cdot)_\tau$ and $(\cdot)_\nu$ denote the tangential and the normal part of the vectors. We look for unknown functions $u:I\times\Omega\to\mathbb R^d$ and $p:I\times\Omega\to\mathbb R$. Let us mention that we permit $\alpha<0$, however, only $\alpha\ge 0$ seems to be physically relevant.

The notion of the weak solution is adopted (with a small modification) from \cite[Section~5]{maringova2019}. We work in Banach spaces
$$
\VV=\{(u,u_b)\in H^1_{\sigma}(\Omega)\times \Lds: u_b=\gamma(u)\},\quad
\HH=L^2_{\sigma}(\Omega)\times \Lds
$$
with norms
$$
\|(u,u_b)\|_\VV^2=2\|Du\|^2_{\Ldva} + \|u_b\|^2_{L^2(\partial\Omega)},\quad
\|(u,u_b)\|_\HH^2=\|u\|^2_{\Ldva} + \beta\|u_b\|^2_{L^2(\partial\Omega)}.
$$
Definitions of all mentioned function spaces can be found in Subsection~\ref{sec:not-fs}. Note, that $\VV$ is a dense subset of $\HH$. 

The duality pairing between $\VV$ and its dual space $\VV^*$, denoted $\dual{\cdot}{\cdot}_{\VV}$, extends the scalar product in $\HH$; see \cite[Section~3.1]{maringova2019}.

When dealing with a function from $\VV$ 
we write only the first component of the vector. The trace of the function is automatically considered as the second component.

\begin{definition}\label{def:weak-p} Let $0<T\leq+\infty$, $\alpha\in \R$, $\beta>0$, $\Omega\subset\R^d$, $\Omega\in C^{0,1}$, $f \in L^1_{loc}([0,T), \Hjs^*)$, $g\in L^1_{loc}([0,T), \Lds)$, $u_0\in \L2s$ and $v_0\in \Lds$. We say that $u$ is a weak solution to the problem \eqref{P1.1}--\eqref{P1.6} if $u\in L^2_{loc}([0,T), \VV)\cap C_{loc}([0,T), \HH)\cap L^\infty_{loc}([0,T),\HH)$, $\partial_t u\in L^1_{loc}(0,T,\VV^*)$, $u(0)=(u_0,v_0)$ in $\HH$, and the equations \eqref{P1.1} and \eqref{P1.3} are satisfied in the weak sense, i.e.,
\begin{equation}\label{eq:weak1}
  \dual{\partial_t u}{\varphi}_{\VV}+2\int_\Omega Du:D\varphi + \alpha\int_{\partial\Omega} u\varphi=
  \dual{(f,g)}{\varphi}_{\VV}
\end{equation}
almost everywhere on $(0, T)$ and for all $\varphi\in \VV$.
\end{definition}

Note that $\beta$ is hidden in \eqref{eq:weak1} in the definition of $\dual{\cdot}{\cdot}_{\VV}$.
Under the regularity assumptions in Definition~\ref{def:weak-p}, equality \eqref{P1.3} makes no sense when understood pointwise. Instead, this part of the boundary condition is hidden in the weak formulation \eqref{eq:weak1}. It can be derived pointwise only if the regularity of the data and solution is better; see Theorem~\ref{thm:main}. 

We are interested in the maximal regularity of weak solutions with respect to the problem data, i.e., the right hand side functions $f$ and $g$,  and the initial values $u_0$ and $v_0$. In order to state the precise conditions for the initial values we need to introduce spaces
$$
\begin{gathered}
  \X_0 = \L2s \times \Hpulsg, \quad \X_1=\{(u,u_b)\in \H2s \times \Hpulsg: \gamma(u)=u_b\}, \\
  \|(f,g)\|_{\X_0}=\|{f}\|_{L^2(\Omega)}+\|{g}\|_{H^{1/2}(\partial\Omega)},\quad \|{(u,v)}\|_{\X_1}=\|{u}\|_{H^2(\Omega)}+\|{v}\|_{H^{1/2}(\partial\Omega)},\\
  \X_{1-\frac1q,q}=(\X_0,\X_1)_{1-\frac1q,q}.
\end{gathered}
$$
The last space is the real interpolation space between $\X_0$ and $\X_1$. It turns out that this is the optimal (largest possible) space for the initial data to guarantee $L^q$-maximal regularity of solutions to the non-homogeneous abstract Cauchy problem; see \cite[Section~2.2.1]{Lunardi1995} for details.

Before we formulate the main theorem, we need some preparation for \nas\ domains; this Lemma is a consequence of Lemma~\ref{lem:korn} below. 

\begin{lemma}\label{lem:prekorn}
Let $\Omega$ be \nas. There exists $\alpha_0<0$ such that for all $u\in H^1(\Omega)$ with $u\cdot\nu=0$ on $\partial\Omega$
$$
2\alpha_0\|u\|^2_{L^2(\partial\Omega)}+4\|Du\|^2_{L^2(\Omega)} \geq 0.
$$

\end{lemma}
From this point forward, $\alpha_0$ always refers to the fixed constant from Lemma~\ref{lem:prekorn}.

Our main theorem follows. 

\begin{theorem}\label{thm:main}
  Let one of the following conditions be met:
  \begin{enumerate}
    \renewcommand{\theenumi}{\alph{enumi}}
  \item $T\in(0,+\infty)$,\label{ass:thma}
  \item $T=+\infty$, $\alpha>0$,\label{ass:thmb}
  \item $T=+\infty$, $\alpha\in(\alpha_0,0]$, $\Omega$ \nas.\label{ass:thmc}
  \end{enumerate}
  For every $q\in (1,+\infty)$, there exists a constant $C>0$ such that, for every $\F=(f,g)\in L^q(I, \X_0)$ and $(u_0, v_0)\in \X_{1-1/q,q}$ there exists a weak solution $u$ of \eqref{P1.1}--\eqref{P1.6}, it is unique, and satisfies $u\in L^q(I,H^2(\Omega))$, $\partial_tu\in L^q(I,L^2(\Omega))$. Moreover, there exists a function $p\in L^q(I, H^1(\Omega))$ such that \eqref{P1.1}--\eqref{P1.6} hold pointwise almost everywhere and
\begin{multline}\label{final_reg}
\|\partial_tu(t)\|_{L^q(I,L^2(\Omega))} +\|u(t)\|_{L^q(I,H^2(\Omega))} + \|p(t)\|_{L^q(I,H^1(\Omega))} \le C(\|\F\|_{L^q(I, \X_0)}+\|(u_0,v_0)\|_{\X_{1-1/q,q}}).
\end{multline}
\end{theorem}


Our approach to the problem is as follows. We rewrite the problem \eqref{P1.1}--\eqref{P1.4} as an abstract Cauchy problem 
\begin{equation}\label{ACP}
\partial_t \U = \A \U + \F(t),  
\end{equation}
on a Hilbert space $\X_0$. Since the problem combines evolutionary equations in the interior of $\Omega$ and on its boundary, the space $\X_0$ must be a product of spaces in the interior and on the boundary of $\Omega$, compare \cite{Esch1992, DePruZa2008}.
We show below that $\A$ is the generator of an analytic semigroup $\T$. Then the Variation-Of-Constants-Formula
\begin{equation}\label{Sg_solution}
\U(t)
=\T(t)\U_0 + \int_0^t \T(t-s)\F(s) ds
\end{equation}
defines a mild solution to \eqref{ACP}. This mild solution actually has better properties if $\U_0$ and $\F$ are sufficiently good.
Namely, since $\X_0$ is a Hilbert space we have maximal $L^p$ regularity, i.e., for every $\F\in L^p(I,\X_0)$ the solution $\U$ given by \eqref{Sg_solution} with $\U_0=0$ satisfies $\A\U$, $\dot \U\in L^p(I,\X_0)$ and 
$$
\|\dot \U\|_{L^p(I,\X_0)}+\|\A\U\|_{L^p(I,\X_0)}\le C\|\F\|_{L^p(I,\X_0)}
$$
with a constant $C>0$ independent of $\F$, compare \cite{deSi1964} or \linebreak\cite[Corollary~1.7]{KuWe2004}. Since the mild solution is very regular we show that it is actually a weak solution from Definition~\ref{def:weak-p}. Uniqueness of the weak solution then concludes the argumentation. 



Apart from articles \cite{maringova2019} and \cite{AbBuMa2021} we are aware only of the work \cite{PraZe2023}, that appeared recently. In this article its authors study whether the solutions to \eqref{P1.1}--\eqref{P1.6} are given by an analytic semigroup in spaces $L^p_\sigma(\Omega)\times L^p_\nu(\partial\Omega)$ with $p>1$. The result is rather involved but does not cover our result since we work in $\X_0=L^2_\sigma(\Omega)\times H^{1/2}_\nu(\partial\Omega)$. A variant of dynamic boundary conditions appeared also in a different context. In \cite{Ven1959} they appeared as general boundary conditions that turn a given elliptic differential operator to the generator of a semigroup of positive contraction operators. There are many works on dynamic boundary conditions (or Wentzell\footnote{Note that Wentzell and Ventcel' are different spelling of the same name. The first form is used in MathSciNet, the second one in literature.} boundary conditions) in the context of parabolic and hyperbolic equations without the incompressibility constraint and without the pressure. Our main example are the results in \cite{DePruZa2008} where the maximal $L^p$ regularity is proved for a very general class of parabolic systems equipped with a general dynamic boundary condition. The presented article can be considered as the first step to a parallel theory for the Stokes problem.

In the following section we give the basic notation and define the operator $\A$. Elliptic theory is studied in Section~\ref{sec:el-reg}. The proof of Theorem~\ref{thm:main} is given in Section~\ref{sec:pf-mt}.

\section{Notation and functional analytic setting}
\subsection{Notation and function spaces}\label{sec:not-fs}

The constant $\alpha_0<0$ is a fixed constant from Lemma~\ref{lem:prekorn}. If $z:\R^d\to\R^d$ then $(\nabla z)_{ij}=\partial_jz_i$ and $(Dz)_{ij}=\frac12(\partial_jz_i+\partial_iz_j)$ for $i,j\in\{1,\dots,d\}$. If $A,B$ are matrices, then $AB$ denotes the matrix product, e.g., $([\nabla z]z)_{i}=\partial_kz_iz_k$ for $i\in\{1,\dots,d\}$, while $A:B=a_{ij}b_{ij}$. We use the summation convention over repeated indices. For two vectors $a,b\in\R^d$, $a\cdot b$ denotes the scalar product in $\R^d$.

We recall that $\Omega\subset\R^d$ is a bounded domain with $C^{2,1}$ boundary, $I=(0,T)$ for some $T>0$.

The standard Sobolev and Sobolev-Slobodeckii spaces over $\Omega$ with integrability $2$ and differentiability $s>0$ are denoted by $H^s$. Further,
$$
\begin{gathered}
  \cV=\{u\in C^\infty_0(\Omega); \dvg u =0\},\quad
\L2s = \mbox{closure of $\cV$ in $L^2(\Omega)$}, \\
\Hjs = H^1\cap \L2s,\quad\H2s  = H^2\cap \L2s.
\end{gathered}
$$
We write $\gamma$ for the trace operator.
If $w$ is a function defined on $\Omega$ with trace $\gamma(w)$ on $\partial\Omega$ we denote $w_\nu$ its normal part and $w_\tau$ its tangential part  on $\partial\Omega$. By $\nu(x)$ we denote the unit outer normal vector to $\partial \Omega$ at point $x\in\partial\Omega$. Equalities of functions are understood almost everywhere with respect to the corresponding Hausdorff measure.

We remark that if $w\in \Hjs$  
 then $\dvg w=0$ in $\Omega$ in the weak sense and $w_\nu=0$ on $\partial\Omega$. Consequently, if we define
$$
\begin{gathered}
\Lds =\{w\in L^2(\partial\Omega): w_\nu=0\quad\mbox{a.e. on $\partial\Omega$}\},\\ 
\Hpulsgm = \{w\in H^{\frac12}(\partial\Omega): w_\nu=0\mbox{ on $\partial\Omega$}\},\\
\Htripulsgm=\{w\in H^{\frac32}(\partial\Omega); w_\nu=0\mbox{ on $\partial\Omega$}\},
\end{gathered}
$$
then $\Hpulsg=\gamma(\Hjs)$ and $\Htripulsg=\gamma(\H2s)$.

The Helmholtz-Weyl decomposition
yields $L^2(\Omega)=G_2(\Omega)\otimes \L2s$ where
$$
G_2(\Omega)=\{w\in L^2(\Omega);w=\nabla p, p\in H^1\},
$$
see, e.g., \cite[Theorem III.1.1]{Galdi2011}.
The continuous Leray projection of $L^2(\Omega)$ to $\L2s$ is denoted $P:L^2(\Omega)\to\L2s$.

 \subsection{Definition of the operator $\A$}
The operator $\A$ is considered on the space $\X_0$.
The domain of $\A$ is defined as $D(\A) = \X_1$.
Finally, we set 
\begin{equation}
\quad\A \binom{u}{u_b} = \binom{P \Delta u}{-\bneg[(2Du \cdot \nu)_{\tau} + \alpha u_b]}\quad \mbox{for } \binom{u}{u_b} \in D(\A).
\end{equation}

\section{Regularity theory for the elliptic problem}\label{sec:el-reg}
Before proving that $(\A, D(\A))$ generates an analytic semigroup in $\X_0$, we establish some preliminary results on the existence and regularity of solutions to the following system.
\begin{align}
  \lambda u - \Delta u + \nabla \pi &= f \qquad \text{ in } \Omega,
  \label{eq:1}
\\		
\dvg u &=0 \qquad \text{ in } \Omega,
\label{eq:2}
\\		
\lambda u_{\tau} + \bneg[(2Du\cdot \nu)_{\tau} + \alpha u_{\tau}] &=  h  \qquad \text{ in } \partial\Omega,
\label{eq:bdr1}
\\		
u_{\nu} &= 0 \qquad \text{ in } \partial\Omega.
\label{eq:bdr2}
\end{align}
Since the operator $\A$ is defined on a product space, we retain this structure also in this part of the presentation. However, this is not strictly necessary, because the second component of the space is the trace of the first.

In this part we work in the space $\VV$. We recall its definition
$$
\VV=\{(u,u_b)\in H^1_{\sigma}(\Omega)\times \Lds: u_b=\gamma(u)\}
$$
with norm
$$
\|(u,u_b)\|^2_\VV=2\|Du\|^2_{\Ldva} + \|u_b\|^2_{L^2(\partial\Omega)}.
$$
The norm in $\VV$ is equivalent to the norm in $H^1(\Omega)$ due to Korn's and Poincar\'e's inequalities, originally established in \cite{NeHla1970a}; see also \cite[Proposition~3.13]{AceAmConGho2021}.
For reader's convenience, we present it here.  The essential part of Lemma~\ref{lem:korn} is taken from \cite[Proposition~3.13]{AceAmConGho2021}. The last equivalence of norms is the standard Korn's inequality.

\begin{lemma}\label{lem:korn}
Let $\Omega$ be a bounded Lipschitz domain. Then, for all $u\in H^1(\Omega)$ with $u_\nu=0$ on $\partial\Omega$, we have
$$
\|u\|_{H^1(\Omega)}\sim\|Du\|_{L^2(\Omega)}
$$
if $\Omega$ is \nas,
and
$$
\|u\|_{H^1(\Omega)}\sim\|Du\|_{L^2(\Omega)}+\|u_\tau\|_{L^2(\partial\Omega)}
\sim\|Du\|_{L^2(\Omega)}+\|u\|_{L^2(\Omega)}
$$
if $\Omega$ is arbitrary.
Here, ``$\sim$'' denotes the equivalence of two norms.
\end{lemma}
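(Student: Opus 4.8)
The plan is to prove both equivalences by first invoking the classical second Korn inequality and then upgrading it via a compactness--contradiction argument that exploits the fact that the kernel of the symmetric gradient consists of infinitesimal rigid motions. The ``$\gtrsim$'' inequalities (that each right-hand side is dominated by $\|u\|_{H^1(\Omega)}$) are immediate: one has $\|Du\|_{L^2(\Omega)}\le\|\nabla u\|_{L^2(\Omega)}$ pointwise, while $u_\nu=0$ gives $\gamma u=u_\tau$ on $\partial\Omega$ so that $\|u_\tau\|_{L^2(\partial\Omega)}=\|\gamma u\|_{L^2(\partial\Omega)}\lesssim\|u\|_{H^1(\Omega)}$ by the trace theorem, and trivially $\|u\|_{L^2(\Omega)}\le\|u\|_{H^1(\Omega)}$. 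Hence only the reverse bounds require work, and for them I take the classical second Korn inequality $\|u\|_{H^1(\Omega)}\le C(\|Du\|_{L^2(\Omega)}+\|u\|_{L^2(\Omega)})$ as known; this already yields the last equivalence $\|u\|_{H^1(\Omega)}\sim\|Du\|_{L^2(\Omega)}+\|u\|_{L^2(\Omega)}$ for arbitrary $\Omega$.

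For the remaining two reverse bounds I argue by contradiction. If the asserted inequality fails, there is a sequence $u_n\in H^1(\Omega)$ with $u_n\cdot\nu=0$ on $\partial\Omega$, normalized so that $\|u_n\|_{H^1(\Omega)}=1$, while the corresponding right-hand side tends to $0$. By the Rellich--Kondrachov compact embedding $H^1(\Omega)\hookrightarrow\hookrightarrow L^2(\Omega)$ a subsequence converges in $L^2(\Omega)$; since moreover $\|Du_n\|_{L^2(\Omega)}\to0$, applying the second Korn inequality to the differences $u_n-u_m$ shows $(u_n)$ is Cauchy in $H^1(\Omega)$, hence $u_n\to u$ in $H^1(\Omega)$ with $\|u\|_{H^1(\Omega)}=1$ and $Du=0$. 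Thus $u$ is a nonzero infinitesimal rigid motion, $u(x)=a+Bx$ with $B$ skew-symmetric, and by continuity of the trace it inherits $u\cdot\nu=0$ on $\partial\Omega$.

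For arbitrary $\Omega$ the failing quantity is $\|Du_n\|_{L^2(\Omega)}+\|(u_n)_\tau\|_{L^2(\partial\Omega)}$, so in the limit $u_\tau=0$ on $\partial\Omega$ as well; combined with $u_\nu=0$ this gives $\gamma u=0$. An affine field $a+Bx$ vanishing on all of $\partial\Omega$ (which spans $\R^d$ affinely) must vanish identically, contradicting $\|u\|_{H^1(\Omega)}=1$; this proves $\|u\|_{H^1(\Omega)}\sim\|Du\|_{L^2(\Omega)}+\|u_\tau\|_{L^2(\partial\Omega)}$. For \nas\ $\Omega$ the failing quantity is $\|Du_n\|_{L^2(\Omega)}$ alone, and the limit $u$ is merely a nonzero rigid motion tangent to $\partial\Omega$. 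This is exactly what the hypothesis that $\Omega$ be \nas\ forbids: a nonzero field $a+Bx$ tangent to the boundary generates a one-parameter group of rigid motions preserving the bounded set $\Omega$, i.e. a rotational symmetry, so $\Omega$ would be axisymmetric --- a contradiction.

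The main obstacle is the identification, used in the \nas\ case, of tangent rigid motions with rotational symmetries of $\Omega$, and hence a clean statement of what ``\nas'' is taken to mean; I would fix the definition so that nonaxisymmetry is precisely the absence of a nonzero rigid field with $(a+Bx)\cdot\nu=0$ on $\partial\Omega$, which makes the contradiction step immediate and matches the geometric picture. A secondary point needing care is the passage to the limit of the constraint $u_n\cdot\nu=0$ and, in the arbitrary case, of $(u_n)_\tau\to0$, both of which follow from continuity of the trace operator $H^1(\Omega)\to L^2(\partial\Omega)$ once $H^1$ convergence has been established.
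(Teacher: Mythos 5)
Your proof is correct, but it is worth noting that the paper does not actually prove this lemma at all: the text surrounding it explicitly defers to the literature, attributing the first two equivalences to \cite[Proposition~3.13]{AceAmConGho2021} (with roots in \cite{NeHla1970a}) and the last one to the standard second Korn inequality. What you supply is essentially the self-contained argument behind those citations: the elementary ``$\gtrsim$'' bounds, the second Korn inequality for the last equivalence, and the classical compactness--contradiction scheme (Rellich plus Korn applied to differences, identification of the kernel of $D$ on a connected domain with infinitesimal rigid motions $a+Bx$, $B$ skew) for the other two. The two points you flag are indeed the only delicate ones. First, the conclusion in the \nas\ case hinges entirely on the definition of nonaxisymmetry; the cleanest choice, and the one effectively used in \cite{AceAmConGho2021}, is precisely that no nonzero rigid field satisfies $(a+Bx)\cdot\nu=0$ a.e.\ on $\partial\Omega$, which makes your contradiction immediate without any flow argument. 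If instead one defines axisymmetry geometrically (invariance under a one-parameter rotation group), your reduction via the flow of $a+Bx$ is the right idea, but for a merely Lipschitz boundary the step ``tangent a.e.\ $\Rightarrow$ the flow preserves $\Omega$'' needs a short additional justification (it is immediate for $C^1$ domains, and the paper in fact works with $C^{2,1}$ boundaries elsewhere). Second, in the arbitrary case your observation that $\partial\Omega$ cannot lie in a hyperplane (so an affine field vanishing on $\partial\Omega$ vanishes identically) is correct and closes that branch. So the net effect of your write-up is to replace the paper's citation with an explicit, standard, and essentially complete proof.
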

The first part of this lemma proves Lemma~\ref{lem:prekorn}. 
We continue with the definition of a weak solution to \eqref{eq:1}--\eqref{eq:bdr2}.
\begin{definition}\label{def:weak}
Let $(f,h)\in L^2(\Omega)\times L^2(\partial\Omega)$ (complex valued) and let $\alpha\in\R$,  $\lambda\in \C$.
We say that $(u,u_b)\in \VV$ is a weak solution to \eqref{eq:1}--\eqref{eq:bdr2} if 
\begin{equation} \label{weaksol}
 \lambda \int_{\Omega} u \bar \phi + \int_{\Omega} 2Du:\nabla \bar \phi + (\beta\lambda+\alpha) \int_{\partial \Omega} u_b \bar \phi_b = \int_{\Omega} f\bar \phi + \int_{\partial\Omega} \beta h\bar \phi_b
\end{equation}
holds for every $(\phi,\phi_b)\in \VV$.
\end{definition}
Note again that the boundary condition \eqref{eq:bdr1} is embedded within the weak formulation \eqref{weaksol} and cannot be expressed pointwise under regularity assumptions of Definition~\ref{def:weak}. However, if $u$ is more regular, e.g., $u\in H^2(\Omega)\cap\Hjs$, then one can show that \eqref{eq:1}--\eqref{eq:bdr2} hold pointwise almost everywhere in $\Omega$ or $\partial\Omega$; see Proposition~\ref{strongsol}. 

We will use the standard definition of the sector
\begin{definition}
  For $\omega\in\R$, $\theta\in(0,\pi)$ we define
  $$
  S_{\theta,\omega}=\{\lambda\in\C;\lambda\neq\omega,|\arg(\lambda-\omega)|<\theta\}.
  $$
Throughout this article, $\arg$ denotes the continuous branch of the argument function, taking values in $[-\pi,\pi)$.
\end{definition}

We aim to prove results on the existence, uniqueness and estimates of the weak solutions. Before formulating these results we need some preparatory lemmata. We start with a simple lemma on properties of complex numbers.

\begin{lemma} \label{lem:cn1}
Let $\theta\in(0,\pi)$ and $\Arg$ be a continuous branch of argument.
Then 
\begin{equation}\label{eq:c1}
|a\lambda  + b\mu|\geq \cos(\theta/2)(a|\lambda|+b|\mu|)
\end{equation}
for all $a, b>0$ and all $\lambda,\mu\in\C\setminus\{0\}$ such that $|\Arg(\lambda)-\Arg(\mu)|\leq\theta$.
In particular,
\begin{equation}\label{eq:c2}
|a\lambda  + b|\geq \cos(\theta/2)(a|\lambda|+b)
\end{equation}
for all $a, b >0$ and all $\lambda \in \overline{S_{\theta,0}}$.
\end{lemma}
\begin{proof}
  To prove \eqref{eq:c2}, we realize that due to the fact that $b>0$ we can separately treat the cases when $\Im\lambda>0$ and $\Im\lambda\leq 0$. Then it is sufficient to set $\Arg=\arg$ and apply \eqref{eq:c1} for $\lambda\ne 0$, whereas for $\lambda=0$ is \eqref{eq:c2} obvious.

To prove \eqref{eq:c1}, we set $\omega=(\Arg\lambda+\Arg\mu)/2$ and $\gamma=\theta/2$. Then we have 
\begin{equation} \label{e:args}
|\arg(e^{-i\omega}a\lambda )|, |\arg(e^{-i\omega}b\mu )| \le\gamma,
\end{equation}
i.e., $e^{-i\omega}a\lambda$ and $e^{-i\omega}b\mu$ belong to $\overline{S_{\gamma,0}}$. Obviously, for any $z\in \overline{S_{\gamma,0}}$ we have $\Re z \ge |z|\cos\gamma$.
Now, we can estimate
\begin{equation*}
\begin{aligned}
|a\lambda + b\mu|&=|e^{-i\omega}(a\lambda + b\mu)|\ge \Re(e^{-i\omega}(a\lambda + b\mu)) 
= \Re(e^{-i\omega}a\lambda )  + \Re(e^{-i\omega}b\mu )
\ge \cos\gamma\left(a|\lambda|+b |\mu|\right).
\end{aligned}
\end{equation*}
\end{proof}

The next lemma deals with a fundamental estimate needed for proof of existence of the weak solutions and also for spectral estimates.

\begin{lemma} \label{lem:complexnumbers}
  Let $\alpha\in\R$, $\beta>0$, and let $\omega\in\R$ be such that
  \begin{equation}\label{eq:poincare}
    \exists C>0, \forall u\in\VV:
    2\|Du\|^2_{\Ldva} + (\alpha+\beta\omega) \|u_b\|^2_{L^2(\partial\Omega)}+\omega \|u\|^2_{\Ldva} \ge C\|u\|^2_{\VV}. 
  \end{equation}
  
  Then for every $\theta\in(0,\pi)$ there exists $c>0$ such that for all $\lambda\in \overline{S_{\theta,\omega}}$ and $\U=(u,u_b)\in \VV$ the following inequality holds
	\begin{equation}\label{est:coerc}
	\left|\lambda \|u\|^2_{\Ldva} + 2\|Du\|^2_{\Ldva} + (\alpha+\beta \lambda) \|u_b\|^2_{L^2(\partial\Omega)}\right| \ge c \|\U\|^2_{\VV}+ c|\lambda-\omega| \|\U\|^2_{\HH}.
\end{equation}
\end{lemma}

\begin{proof}
The estimate \eqref{est:coerc} clearly holds for $u=(0,0)\in\VV$. Take $\lambda\in \overline{S_{\theta,\omega}}$ and $u\in\VV\setminus\{(0,0)\}$ arbitrary. Denote 
$$
M(\lambda)=\lambda \|u\|^2_{\Ldva} + 2\|Du\|^2_{\Ldva} + (\alpha+\beta \lambda) \|u_b\|^2_{L^2(\partial\Omega)}.
$$ 
Then
\begin{equation} \label{e:rozlozeno1}
M(\lambda)= (\lambda-\omega)\left(\|u\|^2_{\Ldva} + \beta\|u_b\|^2_{L^2(\partial\Omega)}\right) + 2\|Du\|^2_{\Ldva} + (\alpha+\beta\omega)\|u_b\|^2_{L^2(\partial\Omega)}+\omega\|u\|^2_{\Ldva}.
\end{equation}
Let us observe that $2\|Du\|^2_{\Ldva} + (\alpha+\beta\omega)\|u_b\|^2_{L^2(\partial\Omega)}+\omega\|u\|^2_{\Ldva}>0$ by \eqref{eq:poincare} and $\lambda-\omega\in \overline{S_{\theta,0}}$. 
Equality \eqref{e:rozlozeno1} together with Lemma~\ref{lem:cn1} and \eqref{eq:poincare} imply
\begin{equation*}
|M(\lambda)|\geq c \|\U\|^2_{\VV}+ c|\lambda-\omega| \|\U\|^2_{\HH}.
\end{equation*}
\end{proof}

\begin{remark}\label{rem:poinc}
  The condition \eqref{eq:poincare} is a version of the Poincar\'e-Korn inequality. Note that it is valid if any of the following conditions is met
  \begin{enumerate}
    \renewcommand{\theenumi}{\alph{enumi}}
  \item  $\alpha>0$ and $\omega\ge 0$, \label{c2}
	\item $\Omega$ \nas { }, $\alpha\in(\alpha_0,0]$ and $\omega\ge 0$,\label{c3}
	\item $\alpha\le 0$ and $\omega>-\alpha/\beta$. \label{c1}
\end{enumerate}  
Indeed, this follows for the cases \ref{c2} and \ref{c1} directly from Lemma~\ref{lem:korn}. In the case \ref{c3} one also needs to exploit Lemma~\ref{lem:prekorn}.
\end{remark}

In the next proposition we prove the existence and uniqueness of weak solutions in the set~$\VV$.
\begin{proposition} \label{propweaksol}
  Let  
  $\alpha\in\R$, $\beta>0$, $\theta\in(0,\pi)$, and $\omega\geq0$ be such that one of the conditions \ref{c2}, \ref{c3}, \ref{c1} of Remark~\ref{rem:poinc} be satisfied. 
  Then there exists $C>0$ such that for all $(f,h)\in L^2(\Omega)\times L^2(\partial\Omega)$, $\lambda\in \overline{S_{\theta,\omega}}$ there exists a unique weak (complex-valued) solution $(u,u_b)\in \VV$ of \eqref{eq:1}--\eqref{eq:bdr2} satisfying $u_b\in \Hpulsg$. 
	Further, there exists a unique $\pi\in L^2(\Omega)$ with $\int_{\Omega}\pi=0$ such that 
\begin{equation} \label{weaksolp}
\int_{\Omega} f\bar \phi + \int_{\partial\Omega} \beta h\bar \phi_b = \lambda \int_{\Omega} u \bar \phi + \int_{\Omega} 2Du:\nabla \bar \phi-\int_{\Omega} \pi\dvg \bar \phi + (\beta\lambda+\alpha) \int_{\partial \Omega} u_b \bar \phi_b
\end{equation}
for all $\phi\in H^1(\Omega)$. The following estimate holds
\begin{equation}\label{est:weaksol}
  \|u\|_{\VV}+|\lambda-\omega|\|u\|_{\HH}+\|\pi\|_{L^2(\Omega)}\leq C(\|f\|_{L^2(\Omega)}+\|h\|_{L^2(\partial\Omega)}).
\end{equation}
\end{proposition}

\begin{proof}
We define the sesquilinear form 
$$
B(\U,\V)=\lambda \int_{\Omega} u \bar v + \int_{\Omega} 2Du:\nabla \bar v + (\alpha+\beta\lambda) \int_{\partial \Omega} u_b \bar v_b
$$
on $\VV\times \VV$ where $\U=(u,u_b)$, $\V=(v,v_b)$. Lemma~\ref{lem:complexnumbers} and Remark~\ref{rem:poinc} imply the existence of $C>0$ independent of $\U$ and $\lambda$ such that 
\begin{align*}
|B(\U,\U)| &= \left|\lambda \|u\|^2_{\Ldva} + 2\|Du\|^2_{\Ldva} + (\alpha+\beta\lambda) \|u_b\|^2_{L^2(\partial\Omega)}\right| \ge C \|\U\|^2_\VV
\end{align*}
Moreover, the form $B$ is bounded from above on $\VV$. 

By the Lax-Milgram theorem (see, e.g., \cite{Pet1965}), for $\F\in \VV^*$ defined by $\F(\Phi)=\int_{\Omega} f\bar \phi + \int_{\partial\Omega} \beta h\bar \phi_b$ for $\Phi\in \VV$ there exists a unique $\U=(u,u_b)\in \VV$ such that $B(\Phi,\U)=\F(\Phi)$ for every $\Phi=(\phi,\phi_b)\in \VV$, i.e., \eqref{weaksol} holds. By the trace theorem, $u_b=\gamma(u)\in \Hpulsg$. Estimate \eqref{est:weaksol} of $u$ follows from $B(\U,\U)=\F(\U)$, properties of $B$ and $\F$, Lemma~\ref{lem:complexnumbers} and Remark~\ref{rem:poinc}. 

Let us now prove existence and uniqueness of $\pi$. By \cite[Theorem III.5.3]{Galdi2011} any weak solution $u$ defined in Definition~\ref{def:weak} can be associated with a pressure $\pi\in L^2(\Omega)$ satisfying
\begin{equation} \label{weaksolp-pre}
\int_{\Omega} f\bar \phi = \lambda \int_{\Omega} u \bar \phi + \int_{\Omega} 2Du:\nabla \bar \phi-\int_{\Omega} \pi\dvg \bar \phi
\end{equation}
for any $\phi\in H^1(\Omega)$ with $\gamma(\phi)=0$. The pressure is defined uniquely up to an additive constant. Let us further require the constant to be chosen in such a way that the pressure has zero mean over $\Omega$. Then the validity of the estimate \eqref{est:weaksol} for pressure follows from \cite[Lemma~IV.1.1]{Galdi2011} and the estimate \eqref{est:weaksol} for $u$. 

For $\phi\in H^1$ such that $\phi_\nu=0$ on $\partial\Omega$ we can find 
$z\in H^1(\Omega)$ such that $\gamma(z)=0$ on $\partial\Omega$ and $\dvg z = \dvg \phi$ in $\Omega$; see \cite[Theorem III.3.1]{Galdi2011}. Now, $z$ is an admissible test function in \eqref{weaksolp-pre} and $\phi-z\in H^1_{\sigma}(\Omega)$ is an admissible test function in \eqref{weaksol}. Subtracting the so obtained equalities one gets 
\eqref{weaksolp} for all $\phi\in H^1$ such that $\phi_\nu=0$ on $\partial\Omega$ 

\end{proof}

\begin{remark}\label{rem:linup}
Note, that the mapping $(f,h)\in \L2s\times\Lds \mapsto (u,\pi)\in \VV\times L^2(\Omega)$ from the previous theorem is linear and bounded. 

  Since the parametres $\alpha\in\R$ and $\beta>0$ are fixed, we do not track the dependence of the constant $C$ on these parameters in Proposition~\ref{propweaksol} and also in all further estimates.
\end{remark}


Before we state our result on regularity of weak solutions we need to prove a lemma on existence of a special function satisfying boundary conditions.

\begin{lemma}\label{lem:extention}
There exists $C>0$ such that for every $h\in \Hpulsg$ there exists $w\in H^2(\Omega)$ with properties 1) $\dvg(w)=0$ in $\Omega$, 2) $\gamma(w)=0$ on $\partial\Omega$, 3) $(2 \gamma(Dw)\cdot\nu)_\tau=h$ on $\partial\Omega$ and 4) $\|w\|_{H^2(\Omega)}\leq C\| h\|_{H^{1/2}(\partial\Omega)}$. 
\end{lemma}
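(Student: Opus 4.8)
The plan is to reduce condition 3) to a Neumann condition and then enforce the divergence constraint by a correction that disturbs neither boundary trace. The starting point is the observation that for any $w\in H^2(\Omega)$ with $w=0$ on $\partial\Omega$ the derivatives of $w$ along $\partial\Omega$ vanish, so on the boundary the full gradient is carried by the normal direction only: in the index convention of Subsection~\ref{sec:not-fs} one gets $\partial_j w_i=(\partial_\nu w)_i\,\nu_j$ on $\partial\Omega$ (contract the relation $\partial_j w_i = c_i\nu_j$, forced by $t_j\partial_j w_i=0$ for tangent $t$, with $\nu_j$). A direct computation then yields $2Dw\cdot\nu=\partial_\nu w+(\partial_\nu w\cdot\nu)\,\nu$, whence $(2Dw\cdot\nu)_\tau=(\partial_\nu w)_\tau$. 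Since $h_\nu=0$, it therefore suffices to construct a divergence-free $w\in H^2(\Omega)$ with $w=0$ and $\partial_\nu w=h$ on $\partial\Omega$, together with the bound 4).

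First I would lift the boundary data. By the standard trace theorem for $H^2(\Omega)$, the map $W\mapsto(\gamma W,\partial_\nu W)$ admits a bounded right inverse from $H^{3/2}(\partial\Omega)\times H^{1/2}(\partial\Omega)$ into $H^2(\Omega)$; applied to $(0,h)$ it produces $W\in H^2(\Omega)$ with $W=0$, $\partial_\nu W=h$ on $\partial\Omega$ and $\|W\|_{H^2(\Omega)}\le C\|h\|_{H^{1/2}(\partial\Omega)}$. This $W$ already satisfies 2), 3) and 4), but in general not 1).

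The remaining and main step is to repair the divergence while keeping both boundary traces. Set $g:=\dvg W\in H^1(\Omega)$. Two compatibility facts hold: integrating, $\int_\Omega g=\int_{\partial\Omega}W\cdot\nu=0$ since $W=0$ on $\partial\Omega$; and, by the boundary identity above, $g=\partial_\nu W\cdot\nu=h_\nu=0$ on $\partial\Omega$, so $g\in H^1_0(\Omega)$. I would then invoke the Bogovskii operator (see \cite[Section~III.3]{Galdi2011}), which for a bounded Lipschitz domain solves $\dvg z=g$ and maps $\{g\in H^1_0(\Omega):\int_\Omega g=0\}$ boundedly into $H^2_0(\Omega)^d$, giving $z\in H^2(\Omega)$ with $\dvg z=g$, with $z=0$ and $\partial_\nu z=0$ on $\partial\Omega$, and $\|z\|_{H^2(\Omega)}\le C\|g\|_{H^1(\Omega)}\le C\|h\|_{H^{1/2}(\partial\Omega)}$. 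Putting $w:=W-z$ then yields $\dvg w=0$, while $w=0$ and $\partial_\nu w=h$ on $\partial\Omega$ are preserved, so $(2Dw\cdot\nu)_\tau=h$, with $\|w\|_{H^2(\Omega)}\le C\|h\|_{H^{1/2}(\partial\Omega)}$, as required.

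The delicate point I expect to be the real obstacle is precisely that the divergence corrector must not alter the prescribed Dirichlet and Neumann traces of $W$; that is, $z$ must lie in $H^2_0(\Omega)$, not merely in $H^2(\Omega)\cap H^1_0(\Omega)$. This is exactly the refined mapping property of the Bogovskii operator (it sends mean-zero $C_0^\infty$ data to $C_0^\infty$ fields, hence mean-zero $H^1_0$ data to $H^2_0$ fields), which is why the two compatibility checks $\int_\Omega g=0$ and $g|_{\partial\Omega}=0$ are indispensable. The only technical caveat is to confirm that the $C^{2,1}$ domain is admissible for this version of the estimate, i.e. that it is a finite union of domains star-shaped with respect to a ball, which a bounded $C^{2,1}$ domain is.
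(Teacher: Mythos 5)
Your proof is correct and follows essentially the same route as the paper's: lift $(0,h)$ by the inverse trace theorem, observe that the vanishing Dirichlet trace reduces $(2Dw\cdot\nu)_\tau$ to $(\partial_\nu w)_\tau$ and forces $\dvg W=\partial_\nu W\cdot\nu=h_\nu=0$ on $\partial\Omega$, then correct the divergence with the Bogovskii operator mapping mean-zero $H^1_0$ data into $H^2_0$. The only cosmetic difference is that you obtain the boundary identity for $\dvg W$ directly from $\partial_j W_i=(\partial_\nu W)_i\,\nu_j$, whereas the paper invokes the H\'eron formula for the same conclusion.
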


\begin{remark}
  Regularity of $w\in H^2(\Omega)$ together with 1) and 2) imply $w\in\H2s$.
\end{remark}

\begin{proof}[Proof of Lemma~\ref{lem:extention}]
  Step 1: We construct a function $z\in H^2(\Omega)$ satisfying conditions 2)-4) and, additionally, 5) $\dvg z=0$ on $\partial\Omega$. By the inverse trace theorem (see, e.g., \cite[Theorem~2.5.8]{Ne2012})
  there exists $z\in H^2(\Omega)$ such that $\gamma(z)=0$ and $\gamma(\nabla z)\nu= h$ on $\partial\Omega$ and $\|z\|_{H^2(\Omega)}\leq C\|h\|_{H^{1/2}(\partial\Omega)}$. This function $z$ obviously satisfies 2) and 4). Since $\gamma(z)= 0$ on $\partial \Omega$, it follows that $\gamma(\nabla z)\xi=0$ for any tangent vector $\xi$ to $\partial \Omega$. Consequently, 
$$
[\gamma(\nabla z)^T\nu]\cdot \xi =\xi^T\gamma(\nabla z)^T\nu = [\gamma(\nabla z)\xi]^T\nu=0\nu=0.
$$ 
Thus, we obtain $(2[\gamma(Dz)]\nu)_\tau=([\gamma(\nabla z)]\nu)_\tau+([\gamma(\nabla z)]^T\nu)_\tau=([\gamma(\nabla z)]\nu)_\tau=h_{\tau}=h$, which confirms 3). Further, Héron's formula
(see \cite[Lemme 3.3]{He1981} or \cite[Lemma 3.5]{AmGi1994}) yields 
\begin{equation}\label{eq:heron}
  \dvg z=\dvg_{\partial\Omega}(z_\tau)+[\gamma(\nabla z)]\nu\cdot\nu- 2K z_\nu\quad\mbox{on $\partial\Omega$.} 
\end{equation}
In the formula, $K$ denotes the mean curvature of $\partial\Omega$ and $\dvg_{\partial\Omega}$ denotes the surface divergence. All three terms on the right-hand side of \eqref{eq:heron} are zero since $\gamma(z)=0$ on $\partial \Omega$ and $[\gamma(\nabla z)]\nu\cdot \nu = h\cdot \nu=0$. So, 5) holds.

Step 2: It remains to correct the solenoidality of $z$ without destroying the conditions $2)-4)$. To do this we apply \cite[Theorem 2]{Bog1980}
to the problem $\dvg \zeta = \dvg z$ in $\Omega$. Since $\dvg z\in H^1_0(\Omega)$ and $\int_{\Omega}\dvg z=\int_{\partial\Omega}z_\nu=0$ there exists a solution $\zeta\in H^2_0(\Omega)$ of this problem such that $\|\zeta\|_{H^2}\leq C\|\dvg z\|_{H^1}\leq C\|z\|_{H^2}\leq C\|h\|_{H^{1/2}}$.

Finally, it remains to define $w=z-\zeta$. This function satisfies all conditions 1)--4).  
\end{proof}

\begin{theorem} \label{T_reg}
Under the assumptions of Proposition~\ref{propweaksol} the unique weak solution $(u,u_b)$ of \eqref{eq:1}--\eqref{eq:bdr2} and the associated pressure $\pi$ satisfy 
$(u,u_b)\in D(\A)$, $\pi\in H^1(\Omega)$ for all $f\in L^2(\Omega)$, $h\in \Hpulsg$. Moreover, there exists $C>0$ independent of $\lambda$, $f$, $h$ such that
\begin{equation}\label{est:h2}
  \|u_b\|_{H^{3/2}(\partial\Omega)}+\|u\|_{H^2(\Omega)} + \|\pi\|_{H^1(\Omega)}
  \le C(\|f\|_{L^2(\Omega)}+\|h\|_{H^{1/2}(\partial\Omega)}).
\end{equation}


\end{theorem}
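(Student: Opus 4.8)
The plan is to reduce the resolvent problem \eqref{eq:1}--\eqref{eq:bdr2} to a \emph{stationary} Stokes system with prescribed tangential stress (a perfect-slip / Navier-type boundary value problem) and then to feed the a priori bounds already supplied by Proposition~\ref{propweaksol} and Proposition~\ref{L_firstest} into the corresponding elliptic estimate. Uniqueness of $(u,u_b)$ and of the associated pressure being already settled, only regularity remains. Concretely, I would first move the zero-order term to the right-hand side, writing the interior equation as $-\Delta u+\nabla\pi=f-\lambda u=:F$ with $F\in L^2(\Omega)$, and rewrite the boundary condition \eqref{eq:bdr1} as a prescribed tangential stress, $(2Du\cdot\nu)_\tau=\beta h-(\beta\lambda+\alpha)u_b=:G$. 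Since Proposition~\ref{propweaksol} already gives $u_b=\gamma(u)\in\Hpulsg$ and $h\in\Hpulsg$, we have $G\in H^{1/2}(\partial\Omega)$, so for each fixed $\lambda$ the pair $(u,\pi)$ solves a genuine stationary Stokes system with $L^2$ force and $H^{1/2}$ stress data, together with $\dvg u=0$ and $u_\nu=0$ (the pressure being the one associated through \eqref{weaksolp}).

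Next I would use Lemma~\ref{lem:extention} to homogenise the stress datum: applied to $G$ it produces $w\in\H2s$ with $w=0$ and $(2Dw\cdot\nu)_\tau=G$ on $\partial\Omega$ and $\|w\|_{H^2(\Omega)}\le C\|G\|_{H^{1/2}(\partial\Omega)}$. Then $v:=u-w$ is divergence-free, satisfies $v_\nu=0$ and $(2Dv\cdot\nu)_\tau=0$ on $\partial\Omega$, and solves $-\Delta v+\nabla\pi=F+\Delta w$ in $\Omega$ with $F+\Delta w\in L^2(\Omega)$. For this homogeneous perfect-slip Stokes problem the classical $H^2$ elliptic regularity theory (localisation, flattening of the boundary, and the Héron curvature identity already recorded in \eqref{eq:heron}) yields $v\in H^2(\Omega)$ and $\pi\in H^1(\Omega)$, hence $u=v+w\in H^2(\Omega)$, $\pi\in H^1(\Omega)$, and, by the trace theorem, $u_b=\gamma(u)\in\Htripulsg$. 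This establishes the qualitative membership $(u,u_b)\in D(\A)$ together with $\pi\in H^1(\Omega)$ for every fixed $\lambda$, which is already half of the statement.

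Finally, the same elliptic theory delivers the a priori estimate $\|u\|_{H^2(\Omega)}+\|\pi\|_{H^1(\Omega)}\le C\big(\|F\|_{L^2(\Omega)}+\|G\|_{H^{1/2}(\partial\Omega)}+\|u\|_{H^1(\Omega)}\big)$ with $C$ independent of $\lambda$, the underlying operator now being $\lambda$-free. Here $\|F\|_{L^2}\le\|f\|_{L^2}+|\lambda|\,\|u\|_{L^2}$ is controlled uniformly in $\lambda$ by the first spectral estimate \eqref{est:apr1}, the lower-order term $\|u\|_{H^1}$ is controlled by Proposition~\ref{propweaksol}, and $\|u_b\|_{H^{3/2}}\le C\|u\|_{H^2}$ by trace. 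I expect the main obstacle to be the stress datum $\|G\|_{H^{1/2}}=\|\beta h-(\beta\lambda+\alpha)u_b\|_{H^{1/2}}$, which carries the scale-critical quantity $|\lambda|\,\|u_b\|_{H^{1/2}(\partial\Omega)}$. This term cannot be absorbed by soft interpolation between the $L^2$ bound $|\lambda|\,\|u_b\|_{L^2}\le C(\|f\|_{L^2}+\beta\|h\|_{L^2})$ from Proposition~\ref{L_firstest} and the top-order bound $\|u_b\|_{H^{3/2}}\le C\|u\|_{H^2}$, since every interpolant retains the full first power of $\lambda$, so the $\|u\|_{H^2}$ contribution it generates comes weighted by $|\lambda|$ and cannot be moved to the left. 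The correct tool is therefore a resolvent (second spectral) estimate $|\lambda|\,\|u_b\|_{H^{1/2}(\partial\Omega)}\le C(\|f\|_{L^2}+\|h\|_{H^{1/2}})$, expressing the $|\lambda|^{-1}$ decay of the boundary trace in the boundary layer of width $|\lambda|^{-1/2}$; I would prove it by the same localisation and boundary-flattening, followed by an explicit analysis of the normal-direction resolvent after a tangential Fourier transform. Once this boundary estimate is available, inserting it into the stationary estimate above closes \eqref{est:h2} with a constant independent of $\lambda$.
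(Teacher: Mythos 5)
Your reduction contains a genuine gap at exactly the point you flag yourself. You move the whole boundary term to the right-hand side, writing $(2Du\cdot\nu)_\tau=\beta h-(\beta\lambda+\alpha)u_b=:G$, and then need $\|G\|_{H^{1/2}(\partial\Omega)}$ bounded uniformly in $\lambda$, i.e. the estimate $|\lambda|\,\|u_b\|_{H^{1/2}(\partial\Omega)}\le C(\|f\|_{L^2(\Omega)}+\|h\|_{H^{1/2}(\partial\Omega)})$. You correctly observe that this cannot be obtained by interpolating between \eqref{est:apr1} and the trace of the $H^2$ norm, and you then postulate it as a ``second spectral estimate'' to be proved by localisation, boundary flattening and a tangential Fourier analysis of the model resolvent. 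That estimate is not proved in your argument, and it is not an innocent input: in the paper it appears only \emph{after} Theorem~\ref{T_reg}, in the proof of sectoriality, where the resolvent bound for the boundary component is read off from the boundary equation together with $\|Du\|_{H^{1/2}(\partial\Omega)}\le C\|u\|_{H^2(\Omega)}\le C(\|f\|_{L^2(\Omega)}+\|h\|_{H^{1/2}(\partial\Omega)})$, i.e. from \eqref{est:h2} itself. Assuming it in order to prove \eqref{est:h2} is therefore essentially circular, and supplying an independent proof via the model-problem analysis you sketch would amount to redoing the parameter-elliptic theory for the Navier-slip Stokes resolvent from scratch --- a substantial piece of work that the proposal does not carry out.

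The paper avoids the difficulty by never moving the $\lambda$-dependent boundary term to the right. It adds $\gamma u_\tau$ with $\gamma=3|\alpha|+1$ to both sides of \eqref{eq:bdr1}, so the boundary condition reads $(2Du\cdot\nu)_\tau+\tilde\alpha u_\tau=\beta h+\gamma u_\tau$ with $\tilde\alpha=\beta\lambda+\gamma+\alpha$ and $\Re\tilde\alpha\ge 1$ under any of the hypotheses of Proposition~\ref{propweaksol}. The new right-hand side $\beta h+\gamma u_\tau$ is bounded in $H^{1/2}(\partial\Omega)$ uniformly in $\lambda$ by Proposition~\ref{propweaksol} alone; Lemma~\ref{lem:extention} is applied to this $\lambda$-uniformly controlled datum (not to your $G$); and in the difference-quotient argument the retained boundary term contributes $\tilde\alpha\int_{\partial\Omega}\zeta^2|D^h_kv_\tau|^2$, whose real part is nonnegative and hence sits on the good side of the energy identity. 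If you want to keep your structure, the minimal repair is the same: keep $(\beta\lambda+\alpha+\gamma)u_\tau$ on the left and exploit $\Re(\beta\lambda+\alpha+\gamma)\ge 1$ as a coercive term, rather than trying to control $|\lambda|\,\|u_b\|_{H^{1/2}(\partial\Omega)}$ as data.
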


\begin{proof}
According to the definition of $D(\A)$ it suffices to show $u\in H^2(\Omega)$, $\pi\in H^1(\Omega)$ together with the estimate \eqref{est:h2}. Note that the estimate of the boundary value $u_b$ follows from the estimate of $u$ in $H^2(\Omega)$ by the trace theorem.


For $\lambda\in\R$ we rewrite the system in the form
$$
\begin{gathered}
  - \Delta u + \nabla \pi = f-\lambda u, \qquad \dvg u =0,  \qquad \text{ in } \Omega,
\\		
(2Du\cdot \nu)_{\tau} +(\beta\lambda+\eta+\alpha) u_{\tau} =  \beta h +\eta u_{\tau},\qquad u_{\nu} = 0  \qquad \text{ in } \partial\Omega,
\end{gathered}
$$
where $\eta=|\alpha|+1$.
Any of assumptions \ref{c2}--\ref{c1} of Remark~\ref{rem:poinc} implies $(\beta\lambda+\eta+\alpha)\geq1$. We have $\|f-\lambda u\|_{L^2(\Omega)}\leq C(\|f\|_{L^2(\Omega)}+\|h\|_{L^{2}(\partial\Omega)})$ and $\|\beta h +\eta u_{\tau}\|_{H^{1/2}(\partial\Omega)}\leq C(\|f\|_{L^2(\Omega)}+\|h\|_{H^{1/2}(\partial\Omega)})$ by Proposition~\ref{propweaksol}. Therefore, we can apply \cite[Theorem 4.5]{AceAmConGho2021} to get the estimate \eqref{est:h2}.

If $\lambda\in\C$ we still have a weak solution $u$ of \eqref{eq:1}--\eqref{eq:bdr2} by Proposition \ref{propweaksol}. We would like to apply a complex valued analogue of \cite[Theorem 4.5]{AceAmConGho2021} to
\begin{equation}\label{eq:weak2}
\begin{gathered}
  - \Delta v + \nabla \sigma = \tf, \qquad \dvg v =0,  \qquad \text{ in } \Omega,
\\		
(2Dv\cdot \nu)_{\tau} +\talpha v_{\tau} =  \tha,\qquad v_{\nu} = 0  \qquad \text{ in } \partial\Omega,
\end{gathered}
\end{equation}
where $\tf=f-\lambda u\in L^2(\Omega)$, $\talpha=\beta\lambda+\eta+\alpha\in\overline{S_{\theta,1}}$ and $\tha=\beta h +\eta u_{\tau}\in \Hpulsg$ with norms independent of $\lambda$. The proof presented in \cite{AceAmConGho2021} works also in the complex valued situation with minor changes.

As in that article, we can again assume without loss of generality that $\tha=0$. In fact, if $\tha\ne 0$ we consider a solenoidal function $w\in \H2s$ satisfying the equation \eqref{eq:weak2}$_2$ on the boundary.  Such a function exists and is independent of $\talpha$ due to Lemma~\ref{lem:extention} and satisfies $\|w\|_{H^2(\Omega)}\le C\|\tha\|_{H^{1/2}(\partial\Omega)}$. Then it suffices to study the solution to \eqref{eq:weak2} with the right hand side $\tilde f  + \Delta w\in L^2(\Omega)$ and $\tha=0$.

To show the regularity of a weak solution to \eqref{eq:weak2} with $\tha=0$ and of the associated pressure we apply the method of difference quotients as in \cite{AceAmConGho2021}. It can be followed almost line by line. The only difference  is in obtaining regularity at the boundary in the tangent direction since our parameter $\talpha$ is complex. We present here the main idea of this estimate in the case that we deal with the flat portion of the boundary. 
Let $x_0\in\partial\Omega$, $r>0$, $U:=B(x_0,r)$, $2U:=B(x_0,2r)$ be such that $\partial\Omega\cap 2U$ is a subset of a hyperplane perpendicular to $e_d$.  We test the weak formulation of the equation \eqref{eq:weak2} by the complex conjugate $\overline{D^{-h}_k(\zeta^2D^h_k v)}$, where $\zeta\in{\cal D}(2U)$, $\zeta\geq\chi_{U}$ is a cut-off function that localizes our consideration to the neighborhood of the flat boundary and $D^{h}_k$ is a difference quotient of size $h\ne 0$ taken in the direction $e_k$ parallel to the boundary, i.e., $D^h_k v(x)=(v(x+he_k)-v(x))/h$ for $x\in\R^d$. We can follow the computation in the section (i) of the proof of \cite[Theorem~4.5]{AceAmConGho2021} almost line by line to get
$$
2 \int_{\Omega} \zeta^2|D^h_kD v|^2 +\talpha \int_{\partial\Omega} \zeta^2 |D^h_k v_{\tau}|^2\leq C(\|f\|_{L^2(\Omega)}^2+\|\pi\|_{L^2(\Omega)}^2+\|\nabla u\|_{L^2(\Omega)}^2).
$$
Here comes the only difference in the argumentation, since to estimate the left hand side from below we need to employ Lemma~\ref{lem:cn1} with $\talpha-1\in \overline{S_{\theta,0}}$
\begin{multline*}
\left|2 \int_{\Omega} \zeta^2|D^h_kD v|^2 +\talpha \int_{\partial\Omega} \zeta^2 |D^h_k v_{\tau}|^2\right| = \left|2 \int_{\Omega} \zeta^2|D^h_kD v|^2 +\int_{\partial\Omega} \zeta^2 |D^h_k v_{\tau}|^2+(\talpha-1) \int_{\partial\Omega} \zeta^2 |D^h_k v_{\tau}|^2\right| \\
\ge c\left(2 \int_{\Omega} \zeta^2|D^h_kD v|^2 +\int_{\partial\Omega} \zeta^2 |D^h_k v_{\tau}|^2\right).
\end{multline*}
Hence, one can continue as in \cite{AceAmConGho2021} to conclude that solutions of \eqref{eq:weak2} satisfy
$$
\|v\|_{H^2(U)} + \|\sigma\|_{H^1(U)} \le C(\|\tf\|_{L^2(\Omega)}+\|\tha\|_{H^{1/2}(\partial\Omega)})
$$
which implies \eqref{est:h2} by the flat portion of $\partial\Omega$. The full estimate \eqref{est:h2} is obtained by localization and flattening the boundary. For details see \cite{Bei2004}.

\end{proof}

\begin{remark}\label{rem:pressure-operator}
  It follows from Theorem~\ref{T_reg}, Proposition \ref{propweaksol} and Remark~\ref{rem:linup} that the mapping associating the pressure with zero mean to the problem data, $(f,h)\in \L2s\times\Hpulsg\mapsto \pi\in H^1(\Omega)$, is linear and bounded from $\L2s\times\Hpulsg$ to $H^1(\Omega)$.
\end{remark}  

\begin{remark}
It seems to us that in \cite{AceAmConGho2021} the result corresponding to the previous Theorem is announced for bounded domains $\Omega$ with $C^{1,1}$ boundary. As the main reference for the technique that allows to get the result in the neighborhood of the nonflat boundary is presented \cite{Bei2004}. We are not able to reconstruct the proof for $C^{1,1}$ domains and we want to remark that also in \cite{SolSca1973} and \cite{Bei2004} it is assumed that the boundary of $\Omega$ is  $C^3$ and $C^{2,1}$ respectively. 

The proof of regularity up to the boundary is done in the following steps. In order to avoid troubles with nonflat boundary, the problem is reformulated as a regularity problem with flat boundary and finally for this problem the technique of differences is used to show regularity of its solutions.

\newcommand{\tu}{\tilde u}
Let us discuss the flattening of the boundary in more detail. We assume that $x_0\in\partial\Omega$, $r>0$ and $\Omega\cap B(x_0,r)=\{x\in B(x_0,r); x_d>H(x_1,\dots,x_{d-1})\}$, where $H:\R^{d-1}\to\R$ is a given function parametrizing the boundary of $\Omega$. Moreover, the coordinate system corresponding to $x_0$ is chosen in such a way that $H(0,\dots,0)=0$, $\nabla'H(0,\dots,0)=0$. A modified solution is defined by the formula
$\tu(x',x_d-H(x')):=(u'(x),u_d(x)-\nabla'H(x')\cdot u'(x))$ for $x\in \Omega\cap B(x_0,r)$, where $x'=(x_1,\dots,x_{d-1})$, $u'=(u_1,\dots,u_{d-1})$, $\nabla'=(\partial_1,\dots,\partial_{d-1})$. Note that it is defined on a subset of $\{x\in\R^d;x_d>0\}$. The term $\nabla'H(x')\cdot u'(x)$ is subtracted from the last component of $u$ to enforce $\dvg \tu =0$ in new coordinates.

It can be shown that the function $\tu$ then again solves a variant of the Stokes problem and that this function is $H^2(B((x_1,\dots,x_{d-1},0),\rho)\cap \{x\in\R^d;x_d>0\})$ for some $\rho>0$. One should reconstruct from this fact that also the original function $u$ is in $H^2$. However, the term $\nabla'H(x')\cdot u'(x)$ stands in the way. One needs to show that it also belongs to $H^2$. Here one uses the choice of the coordinate system. It is clear that some information on third derivative of $H$ is needed, e.g., $H\in C^{2,1}$. In \cite[Section 4, page 1096]{Bei2004} a variant $H\in W^{3,3}$ is also discussed. 
\end{remark}

\begin{proposition} \label{strongsol}
Under the assumptions of Proposition~\ref{propweaksol} let $\F=(f,h)\in \L2s\times \Hpulsg$. The weak solution $\U:=(u,u_b)\in \VV$ of \eqref{eq:1}--\eqref{eq:bdr2}  belongs to $D(\A)$ and satisfies $\lambda \U-\A\U=\F$.
\end{proposition}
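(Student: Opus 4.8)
The plan is to combine the elliptic regularity already obtained in Theorem~\ref{T_reg} with a straightforward integration by parts, and then to split the resolvent identity $\lambda\U-\A\U=\F$ into its interior and boundary components. By Theorem~\ref{T_reg} the weak solution $(u,u_b)\in Z$ satisfies $u\in H^2(\Omega)$, and since $u$ is already divergence-free we have $u\in\H2s$; moreover $u_b=\gamma(u)\in\Hpulsg$ and the associated pressure $\pi$ lies in $H^1(\Omega)$. In particular $\gamma(u)=u_b$, so $\U=(u,u_b)\in\X_1=D(\A)$, which already settles the membership claim. It then remains only to verify the abstract equation $\lambda\U-\A\U=\F$.

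For this I would first upgrade the weak identity \eqref{weaksolp} to the pointwise equations \eqref{eq:1}--\eqref{eq:bdr2}. Since now $u\in H^2(\Omega)$ and $\pi\in H^1(\Omega)$, I can integrate by parts in \eqref{weaksolp}: using $2\dvg(Du)=\Delta u$ (valid because $\dvg u=0$) together with $(\phi_b)_\nu=0$, the volume term $\int_\Omega 2Du:\nabla\bar\phi$ produces $-\int_\Omega\Delta u\cdot\bar\phi+\int_{\partial\Omega}(2Du\cdot\nu)\cdot\bar\phi_b$, while $-\int_\Omega\pi\dvg\bar\phi=\int_\Omega\nabla\pi\cdot\bar\phi$. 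Choosing first $\phi\in C_0^\infty(\Omega)^d$ (so that $\phi_b=0$) forces $\lambda u-\Delta u+\nabla\pi=f$ almost everywhere in $\Omega$, which is \eqref{eq:1}. Reinserting this interior identity and then letting $\phi_b$ range over all tangential traces on $\partial\Omega$ isolates the tangential part of the remaining boundary term and yields $(2Du\cdot\nu)_\tau+(\beta\lambda+\alpha)u_b=\beta h$ on $\partial\Omega$; multiplying by $\bneg$ this is exactly \eqref{eq:bdr1}, recalling that $u_\tau=u_b$ because $u_\nu=0$.

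It then remains to read off the two components of $\lambda\U-\A\U$. For the boundary component, the equation just derived gives $\lambda u_b+\bneg[(2Du\cdot\nu)_\tau+\alpha u_b]=h$, which is precisely the second entry of $\lambda\U-\A\U=\F$. For the interior component I would apply the Leray projection $P$ to \eqref{eq:1}: since $f\in\L2s$ we have $Pf=f$, since $u\in\L2s$ we have $Pu=u$, and $P\nabla\pi=0$ because $\nabla\pi\in G_2(\Omega)$ is $L^2$-orthogonal to $\L2s$ by the Helmholtz--Weyl decomposition. Hence $\lambda u-P\Delta u=f$, which is the first entry. Together these two identities give $\lambda\U-\A\U=\F$ in $\X_0$, completing the argument.

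The step I expect to require the most care is the boundary integration by parts: one must keep track of the full vector $2Du\cdot\nu$ appearing in the boundary term and argue that testing against arbitrary tangential traces $\phi_b$ with $(\phi_b)_\nu=0$ recovers only its tangential part $(2Du\cdot\nu)_\tau$, and that these traces are rich enough to force the pointwise boundary identity (which holds in $\Hpulsg$, since all terms involved lie in $H^{1/2}(\partial\Omega)$ once $u\in H^2(\Omega)$). Everything else is bookkeeping, resting on the $H^2$-regularity supplied by Theorem~\ref{T_reg} and the orthogonality built into the Helmholtz--Weyl decomposition.
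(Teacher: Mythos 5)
Your proposal is correct and follows essentially the same route as the paper: invoke Theorem~\ref{T_reg} to get $u\in H^2(\Omega)$, $\pi\in H^1(\Omega)$ and hence $\U\in D(\A)$, integrate by parts in \eqref{weaksolp} to recover the interior equation $\lambda u-\Delta u+\nabla\pi=f$, apply the Leray projection to obtain the first component of $\lambda\U-\A\U=\F$, and reinsert the interior identity to isolate the boundary equation. The only cosmetic difference is that the paper concludes the boundary identity by testing with the explicit function $\phi=\beta h_\tau-([Du]\nu)_\tau-(\beta\lambda+\alpha)(u_b)_\tau$, whereas you argue via the richness of tangential traces; both are fine, and you correctly flag the point that tangential test traces only detect $(2Du\cdot\nu)_\tau$.
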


\begin{proof}
Since $(u,u_b)\in D(\A)$ and the associated pressure $\pi\in H^1$ by Theorem~\ref{T_reg}, we have  $u\in H^2$  and by \eqref{weaksolp}
\begin{equation} \label{seven}
\begin{aligned}
\int_{\Omega} f\bar \phi + \int_{\partial\Omega} \beta h\bar \phi_b &= \lambda \int_{\Omega} u \bar \phi + 2\int_{\Omega}Du:\nabla \bar \phi -\int_{\Omega} \pi\dvg \bar \phi + (\beta\lambda+\alpha) \int_{\partial \Omega} u_b \bar \phi_b\\
&= \lambda \int_{\Omega} u \bar \phi +2\int_{\partial\Omega} [Du]\nu \bar \phi_b - \int_{\Omega} \Delta u \bar \phi +\int_{\Omega} \nabla\pi \bar \phi+ (\beta\lambda+\alpha) \int_{\partial \Omega} u_b \bar \phi_b\\
\end{aligned}
\end{equation}
for any $\phi\in H^1$, $\phi_b=\gamma(\phi)$ with $(\phi_b)_\nu=0$ on $\partial\Omega$. It follows $\lambda u-\Delta u+\nabla\pi=f$ a.e. in $\Omega$ which gives $\lambda u - P(\Delta u) = f$. Inserting the pointwise equality $\lambda u-\Delta u+\nabla\pi=f$  into \eqref{seven} for a general $\phi\in H^1$, $\phi_b=\gamma(\phi)$ with $(\phi_b)_\nu=0$ on $\partial\Omega$ we obtain
$$
\int_{\partial\Omega} \beta h\bar \phi_b = 2\int_{\partial\Omega} [Du]\nu \bar \phi_b + (\beta\lambda+\alpha) \int_{\partial \Omega} u_b \bar \phi_b.
$$
Due to the regularity of $(u,u_b)$ we get $\beta h =  2([Du]\nu)_\tau  + (\beta\lambda+\alpha)  u_b$ a.e. on $\partial\Omega$.
\end{proof}




\section{Proof of the main theorem}\label{sec:pf-mt}
\subsection{Uniqueness of the weak solutions}
Our Definition~\ref{def:weak-p} of the weak solution differs from the one in \cite[Definition~5.1]{maringova2019} in the assumption on regularity of the right-hand side function $(f,g)$ and of the solution $\partial_tu$. That is why we present here a simple proof of the uniqueness of the weak solutions.

\begin{lemma}\label{lem:uniq}
  In the situation of Definition~\ref{def:weak-p}, let $u,v$ be two weak solutions corresponding to the same data $f$, $g$, $u_0$, $v_0$. Then $u=v$.
\end{lemma}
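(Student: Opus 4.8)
The plan is to derive an energy estimate for the difference $w=u-v$. By linearity of \eqref{eq:weak1}, $w$ is itself a weak solution in the sense of Definition~\ref{def:weak-p} corresponding to the zero data $f=g=0$, $u_0=v_0=0$. In particular $w\in L^2_{loc}([0,T),\VV)\cap C_{loc}([0,T),\HH)$, $w(0)=0$ in $\HH$, and
\[
\dual{\partial_t w}{\varphi}_{\VV}+2\int_\Omega Dw:D\varphi+\alpha\int_{\partial\Omega}w\varphi=0
\]
for a.e. $t\in(0,T)$ and every $\varphi\in\VV$. It then remains to show $w\equiv0$.

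First I would improve the time regularity of $\partial_t w$. Although Definition~\ref{def:weak-p} only gives $\partial_t w\in L^1_{loc}(0,T,\VV^*)$, the displayed identity shows that for a.e. $t$ the functional $\partial_t w(t)$ coincides with $\varphi\mapsto -2\int_\Omega Dw:D\varphi-\alpha\int_{\partial\Omega}w\varphi$ on $\VV$, whose dual norm is bounded by $C\|w(t)\|_\VV$. Since $w\in L^2_{loc}([0,T),\VV)$, this upgrades the derivative to $\partial_t w\in L^2_{loc}([0,T),\VV^*)$. As $\VV\hookrightarrow\HH\hookrightarrow\VV^*$ is a Gelfand triple (the pairing $\dual{\cdot}{\cdot}_{\VV}$ extends the scalar product of $\HH$), the Lions--Magenes lemma then yields that $t\mapsto\|w(t)\|_\HH^2$ is absolutely continuous with $\frac{d}{dt}\|w(t)\|_\HH^2=2\dual{\partial_t w}{w}_{\VV}$ for a.e. $t$.

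Next I would test the equation with the admissible choice $\varphi=w(t)\in\VV$ (legitimate for a.e. $t$) to obtain the energy identity
\[
\tfrac12\tfrac{d}{dt}\|w\|_\HH^2+2\|Dw\|_{\Ldva}^2+\alpha\|w\|_{L^2(\partial\Omega)}^2=0.
\]
If $\alpha\ge0$ the last two terms are nonnegative, so $\|w(t)\|_\HH^2$ is nonincreasing and vanishes because $w(0)=0$. For general $\alpha$ (the definition permits $\alpha<0$) I would absorb the boundary term: the trace interpolation inequality $\|w\|_{L^2(\partial\Omega)}^2\le\varepsilon\|\nabla w\|_{\Ldva}^2+C_\varepsilon\|w\|_{\Ldva}^2$, combined with Korn's inequality from Lemma~\ref{lem:korn} (applicable since $w_\nu=0$ on $\partial\Omega$), bounds $|\alpha|\,\|w\|_{L^2(\partial\Omega)}^2$ by $\|Dw\|_{\Ldva}^2+C\|w\|_{\Ldva}^2$ once $\varepsilon$ is small enough. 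This produces $\frac{d}{dt}\|w\|_\HH^2\le C\|w\|_\HH^2$, and Gronwall's lemma together with $w(0)=0$ forces $\|w(t)\|_\HH=0$ for every $t\in[0,T)$, whence $u=v$.

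The step I expect to need the most care is the energy identity $\frac{d}{dt}\|w\|_\HH^2=2\dual{\partial_t w}{w}_{\VV}$: Definition~\ref{def:weak-p} assumes only $\partial_t w\in L^1_{loc}(\VV^*)$, so the decisive observation is that the equation itself bootstraps $\partial_t w$ into $L^2_{loc}(\VV^*)$ and thereby into the classical Gelfand-triple framework where the Lions--Magenes lemma applies. A secondary technical point is controlling the indefinite sign of the boundary term when $\alpha<0$, which is handled by trace interpolation and Korn's inequality rather than by a sign argument.
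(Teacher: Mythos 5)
Your proof is correct and follows essentially the same route as the paper: both reduce the claim to showing that $w=u-v$, a weak solution with trivial data, vanishes, and both hinge on the same key observation that the equation itself upgrades $\partial_t w$ to $L^2_{loc}([0,T),\VV^*)$, placing $w$ in the standard Gelfand-triple framework. The only difference is that the paper then simply invokes the uniqueness theorem from Maringová's thesis, whereas you carry out the Lions--Magenes/Gronwall energy argument explicitly (including the absorption of the boundary term for $\alpha<0$ via trace interpolation and Korn's inequality), which is a self-contained version of what that citation supplies.
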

\begin{proof}
  We define $w=u-v$. Then $w$ is a weak solution corresponding to the trivial data. In particular, it solves \eqref{eq:weak1} with zero right hand side. From this equation we read that actually $\partial_tw\in L^2_{loc}([0,T),\VV^*)$ and consequently $w$ is the unique weak solution on any $(0,T^*)$ with $T^*\in(0,T)$ in the spirit of \cite[Definition~5.1]{maringova2019}, for uniqueness see \cite[Theorem~5.1]{maringova2019}. It follows that $w=0$ and $u=v$.
\end{proof}

\subsection{The operator $\A$ generates an analytic semigroup}

We show that $(\A, D(\A))$ is densely defined, closed, its resolvent set contains a sector and resolvent estimates are satisfied there; see \eqref{est:resolv}. We start with

\begin{proposition} \label{P_dense}
Let $\alpha\in \R$, $\beta>0$. $D(\A)$ is dense in $\X_0$ and $(\A,D(\A))$ is a closed operator.
\end{proposition}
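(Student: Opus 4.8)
The plan is to treat density and closedness separately, establishing density by a direct approximation argument that decouples interior and boundary data, and closedness through the resolvent already constructed.

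\textbf{Density.} The subtlety is that the two components of an element of $D(\A)=\X_1$ are coupled through the trace, so I would approximate the boundary and interior parts in two independent steps. Given $(f,g)\in\X_0$ and $\eps>0$, I first fix the boundary component. Since $\gamma(\H2s)=\Htripulsg$, it suffices that $\Htripulsg$ is dense in $\Hpulsg$; this follows from density of smooth tangential fields in $H^{1/2}_\nu(\partial\Omega)$, obtained by approximating $g$ by smooth fields in $H^{1/2}(\partial\Omega)$ and applying the tangential projection $w\mapsto w-(w\cdot\nu)\nu$, which is bounded on $H^{1/2}(\partial\Omega)$ because $\nu\in C^{1,1}$, landing in tangential fields lying in $\Htripulsg$. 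Thus I choose $w\in\H2s$ with $\|\gamma(w)-g\|_{H^{1/2}(\partial\Omega)}<\eps$. I then correct the interior without touching the trace: since $f-w\in\L2s$ and $\cV$ is dense in $\L2s$ by definition, I pick $\phi\in\cV$ with $\|\phi-(f-w)\|_{L^2(\Omega)}<\eps$. As $\phi$ has compact support, $\gamma(\phi)=0$, so $u:=w+\phi\in\H2s$ satisfies $\gamma(u)=\gamma(w)$ and $\|u-f\|_{L^2(\Omega)}<\eps$. Hence $(u,\gamma(u))\in D(\A)$ approximates $(f,g)$ in $\X_0$.

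\textbf{Closedness.} I would deduce this from the solvability already established. By Propositions~\ref{propweaksol} and \ref{strongsol}, for any $\lambda$ meeting condition \eqref{c1} (which holds for $\lambda$ real and large, e.g. $\beta\lambda=\max(1,-4\alpha)$) the map $\lambda-\A\colon D(\A)\to\X_0$ is a bijection: surjectivity is Proposition~\ref{strongsol} and injectivity is the uniqueness in Proposition~\ref{propweaksol}. Moreover Theorem~\ref{T_reg} gives $\|u\|_{H^2(\Omega)}+\|u_b\|_{H^{1/2}(\partial\Omega)}\le C(\|f\|_{L^2(\Omega)}+\|h\|_{H^{1/2}(\partial\Omega)})$ for $(u,u_b)=(\lambda-\A)^{-1}(f,h)$, so $R_\lambda:=(\lambda-\A)^{-1}$ maps $\X_0$ boundedly into itself. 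A densely defined operator whose resolvent exists as a bounded everywhere-defined operator is automatically closed: if $(u_n,u_{b,n})\to(u,u_b)$ in $\X_0$ with $\A(u_n,u_{b,n})\to(F,G)$, then $(\lambda-\A)(u_n,u_{b,n})\to\lambda(u,u_b)-(F,G)$, and applying the continuous $R_\lambda$ yields $(u_n,u_{b,n})\to R_\lambda(\lambda(u,u_b)-(F,G))$, whence $(u,u_b)\in D(\A)$ and $\A(u,u_b)=(F,G)$.

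\textbf{Main obstacle.} The only genuine difficulty is the density statement, precisely because of the trace coupling in $D(\A)$; the decoupling trick above---fix the boundary by one function, then adjust the interior by a compactly supported solenoidal field that leaves the trace unchanged---resolves it, modulo the density of $\Htripulsg$ in $\Hpulsg$. Closedness is then essentially formal once the bounded, everywhere-defined resolvent furnished by Theorem~\ref{T_reg} is in hand.
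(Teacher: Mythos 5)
Your density argument is essentially the paper's: approximate the boundary datum by a regular tangential field via the pointwise tangential projection $w\mapsto w-(w\cdot\nu)\nu$, lift it to a divergence-free $H^2$ function, and then fix the interior component with a compactly supported solenoidal field that leaves the trace untouched. (You carry out the boundary approximation in the $H^{1/2}(\partial\Omega)$ norm, which is what the $\X_0$ norm actually requires; the paper's text only records an $L^2(\partial\Omega)$ estimate at that step.) For closedness you take a genuinely different route. The paper argues directly: applying Theorem~\ref{T_reg} to $\lambda\U_n-\A\U_n=\F_n+\lambda\U_n$ it obtains a uniform $H^2$ bound on $u_n$, extracts a weakly convergent subsequence, identifies the weak limit with $u$, and passes to the limit in the trace, the divergence and $P\Delta$. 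You instead invoke the standard functional-analytic fact that an operator for which some $\lambda-\A$ is a bijection of $D(\A)$ onto $\X_0$ with bounded inverse is closed; the bijectivity comes from Propositions~\ref{propweaksol} and~\ref{strongsol}, the bound on the inverse from Theorem~\ref{T_reg}, and none of these depend on Proposition~\ref{P_dense}, so there is no circularity with the later use of closedness to place $\lambda$ in $\rho(\A)$. Your version is shorter and dispenses with the weak-compactness step; the paper's version stays at the level of the PDE and does not need to package the solution operator as a bounded map first. Both arguments are correct.
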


\begin{proof}
  We first prove density of $D(\A)$ in $\X_0$. Let $(f,h)\in\X_0$ and $\eps>0$. Due to the density of $H^{3/2}(\partial\Omega)$ in $H^{1/2}(\partial\Omega)$ there exists $\tilde h_1\in H^{3/2}(\partial\Omega)$ such that  $\|h-\tilde h_1\|_{H^{1/2}(\partial\Omega)}<\eps$. Then we  orthonormally project $\tilde h_1$ to the tangent bundle of $\partial\Omega$ and denote the resulting function $h_1$. Since $\Omega$ has $C^{2,1}$ boundary, the ortonormal projection does not spoil the regularity of $h$. Indeed, $h_1$ can be written as $h_1(x)=\tilde h_1(x)-\langle\tilde h_1(x),\nu(x)\rangle \nu(x)$. 
Consequently, $h_1\in \Htripulsg$. Moreover, since $h_{\nu}=0$ we have $\|h-h_1\|_{H^{1/2}(\partial\Omega)}\le \|h-\tilde h_1\|_{H^{1/2}(\partial\Omega)}<\eps$.
We now find $f_1\in \H2s$ such that $\gamma(f_1)=h_1$. Its existence follows from \cite[Corollary 3.8]{AmGi1994}. Finally, by definition of $\L2s$ there exists $f_2\in \cV$ such that $\|(f-f_1)-f_2\|_{\L2s}<\eps$. Then $(f_1+f_2,h_1)\in D(\A)$ is the desired approximation of $(f,h)\in\X_0$.

To show closedness of $\A$ let $\U=(u,b)$, $\U_n=(u_n,b_n)\in D(\A)$ be such that $\U_n\to\U$ in $\X_0$ and $\F_n:=\A\U_n\to \F$ in $\X_0$ as $n\to+\infty$. In particular,
\begin{align*}
  \gamma(u_n)=b_n,\quad
u_n \to u  \quad\text{ in $\L2s$}, \quad
b_n \to b  \quad\text{ in $\Hpulsg$ as $n\to+\infty$.} \\
\end{align*}
Since $\{G_n\}=\{\lambda\U_n-\F_n\}$ is a bounded sequence in $\X_0$ we can apply Theorem~\ref{T_reg} to the equation $\lambda\U_n-\A\U_n=G_n$ with $\lambda=\max(0,-\alpha/\beta)+1$. We get that the sequence $\{\|u_n\|_{H^2}\}$ is bounded, so a subsequence $\{v_n\}$ of $\{u_n\}$ converges weakly in $H^2(\Omega)$ to some $v$.  By the convergence $u_n \to u$ in $\L2s$ we have $v=u$ and necessarily $u\in H^2(\Omega)$.
Due to the continuity of the trace mapping, the embeddings and the Leray projection $P$ we also get $\gamma(v_n)\rightharpoonup \gamma(u)$ in $H^{3/2}(\partial\Omega)$, $\dvg v_n\rightharpoonup \dvg u$ in $H^1(\Omega)$, and $P\Delta v_n \rightharpoonup P\Delta u$ in $\L2s$. Thus, we conclude that $b=\gamma(u)$, $u\in \H2s$, $(u,b)\in D(\A)$, and $\A\U=\F$.
\end{proof}

\begin{theorem} \label{t:sectoriality}
Let $\alpha\in \R$, $\beta>0$. The operator $(\A, D(\A))$ is sectorial. More precisely, if $\omega\in\R$ is such that $\alpha$, $\beta$ and $\omega$ satisfy one of the conditions in Remark~\ref{rem:poinc} then for any $\theta\in(0,\pi)$ there exists $C>0$ such that 
  \begin{equation}\label{est:resolv}
  \overline{S_{\theta,\omega}}\subset\rho(\A),\quad \forall\lambda\in \overline{S_{\theta,\omega}}:|\lambda-\omega|\|(\lambda-\A)^{-1}\|\leq C.
\end{equation}
If 1) $\alpha>0$ or 2) $\alpha>\alpha_0$ and $\Omega$ \nas, then $\omega$ can be chosen negative.
\end{theorem}

\begin{proof}
  Let $\theta\in(0,\pi)$ and $\lambda\in \overline{S_{\theta,\omega}}$.
  By Proposition \ref{strongsol}, for every $\F=(f,h)\in \X_0$ there exists a solution $\U=(u,u_b)\in D(\A)$ satisfying $(\lambda-\A)\U=\F$, i.e., the operator $\lambda-\A:D(\A)\to\X_0$ is surjective. Since any solution to $(\lambda-\A)\U=\F$ corresponds to a unique weak solution of \eqref{eq:1}--\eqref{eq:bdr2} (by Proposition~\ref{propweaksol}), $\lambda-\A$ is also injective. The operator $\lambda-\A$ is closed by Proposition~\ref{P_dense} and we obtain $\lambda \in \rho(\A)$. In particular, $(\lambda-\A)^{-1}$ is bounded. 



Next, we establish the resolvent estimate, i.e., the inequality
$
|\lambda-\omega|\|\U\|_{\X_0}\le C \|\F\|_{\X_0}
$.
This can be reformulated for $\U=(u,u_b)$ and $\F=(f,h)$ as
$$
|\lambda-\omega|\left(
\|u\|_{L^2(\Omega)} + \|u_b\|_{H^{1/2}(\partial\Omega)}\right)\le C(\|f\|_{L^2(\Omega)}+\|h\|_{H^{1/2}(\partial\Omega)}).
$$
In Proposition~\ref{propweaksol} we have already proved
\begin{equation}\label{est:r1}
|\lambda-\omega|
\|u\|_{L^2(\Omega)}\le C(\|f\|_{L^2(\Omega)}+\|h\|_{H^{1/2}(\partial\Omega)}).
\end{equation}
From the second component of the equation $(\lambda-\A)\U=\F$ (see \eqref{eq:bdr1}), we have
$$
|\lambda|\|u_b\|_{H^{1/2}(\partial\Omega)}\le |\alpha|\| u\|_{H^{1/2}(\partial\Omega)} + C\|D u\|_{H^{1/2}(\partial\Omega)} + \|h\|_{H^{1/2}(\partial\Omega)},
$$
and therefore
\begin{equation}\label{est:r2}
|\lambda -\omega|\|u_b\|_{H^{1/2}(\partial\Omega)}\leq
(|\lambda |+|\omega|)\|u_b\|_{H^{1/2}(\partial\Omega)}\le (|\alpha|+|\omega|)\| u\|_{H^{1/2}(\partial\Omega)} + C\|D u\|_{H^{1/2}(\partial\Omega)} + \|h\|_{H^{1/2}(\partial\Omega)}).
\end{equation}
We estimate the terms containing $u$ on the right-hand side by the trace theorem and by Theorem \ref{T_reg} as
$$
\| u\|_{H^{1/2}(\partial\Omega)} + \|D u\|_{H^{1/2}(\partial\Omega)}\le  C \|u\|_{H^2(\Omega)} \le C(\|f\|_{L^2(\Omega)}+\|h\|_{H^{1/2}(\partial\Omega)})
$$
to get \eqref{est:resolv} combining \eqref{est:r1} and \eqref{est:r2}.

If moreover 1) or 2) holds, then the statement is already established with $\omega=0$. Moreover, $0\in \rho(A)$, implying that a neighborhood of zero belongs to $\rho(A)$. Fix $\omega<0$ within this neighborhood. Then, for appropriate $\theta'<\theta$ we have $S_{\theta',\omega}\subset \rho(A)$, and the resolvent estimates hold on this sector (by standard arguments). As $\theta'\to \pi$ when $\theta\to\pi$, it follows that the resolvent estimates hold on $S_{\theta,\omega}$ for each $\theta<\pi$, with $C$ depending on $\theta$.
\end{proof}

\begin{corollary}\label{cor:sem-bdd}
  The operator $(\A, D(\A))$ generates an analytic semigroup $\{\T(t)\}_{t>0}\subset\LL(\X_0)$. There exist constants $\omega\in \R$ and $C>0$ such that the semigroup satisfies for any $t>0$
\begin{equation}\label{est:sem-bdd}
  \|\T(t)\|_{\LL(\X_0)}\leq C e^{\omega t}.
\end{equation}
If moreover 1) $\alpha>0$ or 2) $\alpha>\alpha_0$ and $\Omega$ \nas, $\omega$ can be chosen negative. 
\end{corollary}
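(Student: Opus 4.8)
The plan is to read Corollary~\ref{cor:sem-bdd} off the preceding Theorem as a direct application of the standard generation theorem for analytic semigroups. By Proposition~\ref{P_dense} the operator $(\A,D(\A))$ is densely defined and closed, and by the preceding Theorem its resolvent set contains a sector $S_{\theta,\omega}$ with half-angle $\theta\in(\pi/2,\pi)$ on which the resolvent estimate \eqref{est:resolv} holds. These are precisely the hypotheses under which $\A$ is sectorial in the sense of Lunardi and hence generates an analytic semigroup; I would cite \cite[Section~2.1]{Lunardi1995} (the definition of sectorial operators together with Proposition~2.1.1 and the remarks on densely defined generators, density being what upgrades the semigroup to one strongly continuous at $0$). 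The semigroup is then given explicitly by the Dunford--Taylor integral
\[
\T(t)=\frac{1}{2\pi i}\int_{\Gamma}e^{\lambda t}(\lambda-\A)^{-1}\,d\lambda,
\]
over a contour $\Gamma\subset S_{\theta,\omega}$ running from $\infty\, e^{-i\psi}$ to $\infty\, e^{i\psi}$ for a suitable $\psi\in(\pi/2,\theta)$ and encircling the vertex $\omega$.

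To extract the growth bound \eqref{est:sem-bdd} cleanly I would first reduce to the case of vertex at the origin. Setting $\mu=\lambda-\omega$, the estimate \eqref{est:resolv} reads exactly $\|(\mu-(\A-\omega))^{-1}\|\le C/|\mu|$ for $\mu\in S_{\theta,0}$, so $\A-\omega$ is sectorial with vertex $0$ and therefore generates a \emph{bounded} analytic semigroup $S(\cdot)$ with $M:=\sup_{t>0}\|S(t)\|_{\LL(\X_0)}<\infty$. Since $\T(t)=e^{\omega t}S(t)$, the bound $\|\T(t)\|_{\LL(\X_0)}\le M e^{\omega t}$ follows immediately with the same $\omega$ that appears in the sector; in the general case this is $\omega=\max(1,-4\alpha)/\beta>0$.

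For the improvement in the cases 1) $\alpha>0$ and 2) $\Omega$ \nas\ with $\alpha>\alpha_0$, I would invoke the second assertion of the preceding Theorem, which furnishes a \emph{negative} vertex $\omega<0$ in \eqref{est:resolv}. Repeating the reduction above with this $\omega$ yields $\|\T(t)\|_{\LL(\X_0)}\le M e^{\omega t}$ with $\omega<0$, i.e.\ exponential decay of the semigroup.

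I do not expect a genuine obstacle here, since all the analytic work---density, closedness, and the sectorial resolvent estimate on a sector of half-angle exceeding $\pi/2$---was already carried out in the preceding results, and the generation theorem is then black-box. The only point needing a little care is the translation between the location $\omega$ of the sector vertex and the exponent in the growth estimate; handling it through the shift $\A\mapsto\A-\omega$, which converts the vertex-$\omega$ estimate into standard vertex-at-$0$ sectoriality, keeps this transparent and transfers the sign of $\omega$ directly to the exponential bound.
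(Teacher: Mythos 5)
Your proposal is correct and follows essentially the same route as the paper, whose proof is a one-line citation of \cite[Proposition~2.1.1]{Lunardi1995}; you have simply unpacked what that proposition does (Dunford--Taylor integral, shift of the vertex to the origin, transfer of the sign of $\omega$ to the exponential bound). No gaps.
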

\begin{proof}
  The statement follows directly from \cite[Proposition~2.1.1]{Lunardi1995} and Theorem \ref{t:sectoriality}.
\end{proof}

Now we are ready to prove the main result, Theorem~\ref{thm:main}.



\subsection{Proof of Theorem~\ref{thm:main}}

We begin by proving the regularity of the weak solution and the estimate \eqref{final_reg} under assumptions \ref{ass:thmb} or \ref{ass:thmc}. Let one of them hold, in particular $I=(0,+\infty)$. Then $\A$ is densely defined, closed and generates a bounded analytic semigroup $\T$ on the Hilbert space $\X_0$ by Proposition~\ref{P_dense} and Corollary~\ref{cor:sem-bdd}. We moreover have the estimate \eqref{est:sem-bdd} with $\omega<0$ at our disposal. From \cite[Theorem~1.1, Corollary~1.7 and (1.9)]{KuWe2004} the operator $\A$ has maximal $L^q$ regularity, i.e., the mild solution $\U^0$ of \eqref{ACP} (see \eqref{Sg_solution}) with $\U(0)=0$ satisfies $\dot\U^0$, $\A\U^0\in L^q(I, \X_0)$ and 
$$
\|\dot\U^0\|_{L^q(I, \X_0)} + \|\A\U^0\|_{L^q(I, \X_0)}\le C\|\F\|_{L^q(I, \X_0)}.
$$
Let us denote $\U^1(t):=\T(t)\U_0$ for $t>0$ the mild solution of \eqref{ACP} with $\F=0$ and $\U(0)=\U_0$. Since $\U_0\in \X_{1-1/q,q}$, the function $t\mapsto\A\U^1(t)$ (and therefore also $t\mapsto \dot\U^1(t)$) belongs to $L^q((0,1),\X_0)$ and the inequality
$$
\|\dot\U^1\|_{L^q((0,1), \X_0)} + \|\A\U^1\|_{L^q((0,1), \X_0)}\le C\|\U_0\|_{\X_{1-1/q,q}}
$$
holds (see \cite[Proposition 2.2.2 and formula (2.2.3)]{Lunardi1995}). From the properties of analytic semigroups (see \cite[Proposition~2.1.1]{Lunardi1995}), we get for $t>0$
$$
\|\A\U^1(t)\|_{\X_0}=\frac1t\|t\A\T(t)\U_0\|_{\X_0}\leq\frac {Ce^{\omega t}}t\|\U_0\|_{\X_0}\le \frac{C e^{\omega t}}t\|\U_0\|_{\X_{1-1/q,q}}.
$$
Since $\kappa(t):=e^{\omega t}/t$ satisfies $\kappa\in L^q(1,+\infty)$ it follows that
$$
\|\dot\U^1\|_{L^q(I, \X_0)} + \|\A\U^1\|_{L^q(I, \X_0)}\le C\|\U_0\|_{\X_{1-1/q,q}}.
$$
Hence, the solution $\U=(u,u_b)=\U^0+\U^1$ of \eqref{ACP} satisfies $\dot\U$, $\A\U\in L^q(I, \X_0)$, and 
$$
\|\dot\U\|_{L^q(I, \X_0)} + \|\A\U\|_{L^q(I, \X_0)}\le C(\|\F\|_{L^q(I, \X_0)}+\|\U_0\|_{\X_{1-1/q,q}}).
$$

For a.e. $t>0$ 
$$
-\A\U(t) = \F(t) - \dot\U(t)\quad\mbox{in $\X_0$}
$$
and $\U(t)$ is also the unique weak solution to \eqref{eq:1}--\eqref{eq:bdr2} with $\lambda=0$ and the right hand side $(\tilde f,\tilde h)=(f(t)-\partial_tu(t), h(t)-\partial_t u_b)\in \X_0$.
It follows from Theorem~\ref{T_reg}, with assumptions \ref{c2} or \ref{c3} of Remark~\ref{rem:poinc}, that the functions $u(t)$, $u_b(t)$ and the associated pressure $\pi(t)$ satisfy the estimate
\begin{multline}\label{est:final}
  \|u_b(t)\|_{H^{\frac32}(\partial\Omega)}+\|u(t)\|_{H^2(\Omega)} + \|\pi(t)\|_{H^1(\Omega)}
\\  \le C(\|f(t)\|_{L^2(\Omega)}+\|\partial_t u(t)\|_{L^2(\Omega)}+\|h(t)\|_{H^{1/2}(\partial\Omega)}+\|\partial_tu_b(t)\|_{H^{1/2}(\partial\Omega)}).
\end{multline}
Since $(\tilde f,\tilde h)\in L^q(I,L^2(\Omega))\times L^q(I,H^{1/2}(\partial\Omega))$ measurability of the mapping $t>0\mapsto \pi(t)\in H^1(\Omega)$ follows from Remark~\ref{rem:pressure-operator}. 
Integrating \eqref{est:final} and using regularity of $\dot\U$ we obtain \eqref{final_reg}.
It remains to show that $\U$ is actually the unique weak solution of \eqref{P1.1}--\eqref{P1.6}.
The function $\U$ satisfies $\U\in C([0,+\infty), \HH)\cap L^\infty_{loc}([0,+\infty),\HH)$, $\dot\U\in L^1_{loc}([0,+\infty),\VV^*)$ and the equation \eqref{eq:weak1} holds almost everywhere in $(0,+\infty)$. As $\U\in L^\infty_{loc}([0,+\infty),\HH)$ we have $u_b\in L^2_{loc}([0,\infty),L^2(\Omega))$. The initial values are attained by \cite[Proposition~2.1.1 and Proposition~2.1.4 (i)]{Lunardi1995} and Proposition~\ref{P_dense}. To show that $u$ is a weak solution of \eqref{P1.1}--\eqref{P1.6} it remains to prove $u\in L^2_{loc}([0,+\infty), H^1(\Omega))$. We know $\U\in L^q(I,D(\A))$, $\U\in L^\infty(I, \X_0)$, consequently $u\in L^q(I,H^2(\Omega))$, $u\in L^\infty(I,L^2(\Omega))$ and an interpolation theorem gives 
$$
\|u(t)\|^2_{H^1(\Omega)}\leq C\|u(t)\|_{H^2(\Omega)}\|u(t)\|_{L^2(\Omega)}.
$$
It is enough to integrate this inequality over the time interval to get $u\in L^2_{loc}([0,+\infty), H^1(\Omega))$. This concludes the proof of the regularity properties in the case of assumptions \ref{ass:thmb} or \ref{ass:thmc}.

If the assumption \ref{ass:thma} holds we can proceed similarly. Recall $I=(0,T)$ with $T\in(0,+\infty)$. First we note that from analyticity of $\A$ we get that the mild solution $\U$ defined in \eqref{Sg_solution} satisfies $U\in L^\infty(I,\X_0)$. Then we rewrite the equation \eqref{ACP} as
$$
\partial_t \U = \tilde\A \U + \tilde \F(t),
$$
with $\tilde \A:=\A-\lambda_0$, $\tilde \F(t):=\F(t)+\lambda_0\U$ and $\lambda_0=\max(0,-\alpha/\beta)+1$. The constant $\lambda_0$ is chosen such that $\overline{S_{\theta,0}}$ is a subset of the resolvent set of $\tilde \A$; see Proposition~\ref{propweaksol} and Theorem~\ref{T_reg}.  The function $\tilde\F$ and the semigroup $\tilde\T$ generated by the operator $\tilde\A$ can be estimated
$$
\|\tilde\F\|_{L^q(I,\X_0)}\leq C(\|\F\|_{L^q(I, \X_0)}+\|\U_0\|_{\X_0})\quad\mbox{and}\quad
\exists\omega<0,\forall t>0: \|\tilde\T(t)\|_{\LL(\X_0)}\leq Ce^{\omega t}.
$$
The rest of the proof of regularity can be done as in the cases \ref{ass:thmb} or \ref{ass:thmc}.

It remains to show that the equations \eqref{P1.1}--\eqref{P1.6} hold almost everywhere. It is clear for \eqref{P1.2} a.e. in $I\times\Omega$ and for \eqref{P1.4} a.e. in $I\times\partial\Omega$. We have already identified, under assumptions of the theorem, the mild and the weak solutions, and we reconstructed the pressure $\pi$ so that for $\varphi\in H^1(\Omega)$ with $\varphi_\nu=0$ at $\partial\Omega$ at almost every $t\in I$
$$
 \int_{\Omega}\partial_t u \varphi + \beta\int_{\partial\Omega}\partial_t u \varphi +\int_\Omega(2 Du:D\varphi -\pi\dvg\varphi)+ \alpha\int_{\partial\Omega} u\varphi=
  \int_{\Omega}f\varphi + \beta\int_{\partial\Omega}g\varphi.
$$
Regularity of $u$ and $\pi$ allows us to use the Divergence theorem in the third integral to get for $\varphi\in H^1(\Omega)$ with $\varphi_\nu=0$ at $\partial\Omega$ and $\eta\in{\cal D}(I)$
$$
 \int_I\int_{\Omega}(\partial_t u + \Delta u +\nabla\pi- f) \varphi\eta + \int_I\int_{\partial\Omega} (\beta\partial_t u  + 2 Du \nu + \alpha u - g)\varphi\eta=0.
 $$
 It follows that \eqref{P1.1} must hold a.e. in $I\times\Omega$ and \eqref{P1.3} must hold a.e. in $I\times\partial\Omega$.

 We have already discussed that the initial values are attained in $\HH$, which also means that \eqref{P1.5} holds a.e. in $\{0\}\times\Omega$ and \eqref{P1.6} holds a.e. in $\{0\}\times\partial\Omega$.
\qed





\section*{Acknowledgements}
This study was funded by the Czech Science Foundation project 20-11027X. We thank the anonymous referees for their valuable remarks and suggestions. 






\bibliographystyle{plain}
\bibliography{vse}




\end{document}